\documentclass[11pt]{article}

\usepackage{amsmath,amsfonts,amssymb,amsthm}
\usepackage{fullpage,subfiles}

\usepackage{graphicx}
\usepackage[colorlinks=true, allcolors=blue]{hyperref}

\newtheorem{theorem}{Theorem}
\newtheorem{corollary}[theorem]{Corollary}
\newtheorem{lemma}[theorem]{Lemma}

\theoremstyle{definition}

\newtheorem{example}[theorem]{Example}

\newcommand{\N}{\mathbb{N}}

\newcommand{\expect}[2]{\mathbb{E}_{#1} \left\{ {#2} \right\}}

\DeclareMathOperator{\diag}{diag}
\DeclareMathOperator{\perm}{perm}

\newcommand{\AND}{\quad \text{and}\quad}

\newcommand{\R}{\mathbb{R}}

\newcommand{\mydet}[1]{\mathrm{det}\left[ {#1} \right]}

\newcommand{\trans}{T}
\newcommand{\rr}{r}

\newcommand{\inv}{\dagger}

\newcommand{\MM}{\mathcal{M}}

\renewcommand{\ss}{n_1}
\renewcommand{\tt}{n_2}
\newcommand{\kk}{m}

\author{Adam W. Marcus\thanks{Research supported by NSF CAREER Grant 
DMS-1552520.}
\\ \'Ecole Polytechnique F\'ed\'erale de Lausanne
}
\title{A class of multivariate polynomial convolutions (and applications)}

\begin{document}

\maketitle

\begin{abstract}
We prove two ``master'' convolution theorems for multivariate determinantal 
polynomials.
The methods used include basic properties of what we call a 
``minor-orthogonal'' ensemble as well as properties of the mixed discriminant 
of matrices.
We also give applications, including a rederivation of a result of Barvinok on 
computing the permanent of a low rank matrix and a polynomial convolution 
corresponding to the unitarily invariant addition of generalized singular 
values. 
\end{abstract}

{\bf Keywords:} Polynomial convolutions, random matrices.

\section{Introduction}\label{sec:intro}

The primary goal of this paper is to prove two ``master'' convolution 
theorems 
for determinantal polynomials.
One unavoidable fact of general matrices is that they are two dimensional 
objects, containing both a width and a length.
While the dimensions of a matrix tend not to appear explicitly in basic linear 
algebra formulas, they seem to have a more direct role in the context of 
random matrices.
One obvious example of this is the {\em Wishart ensemble}: let $W = X  X^*$ 
where 
$X$ is an $n \times m$ random matrix with independent real Gaussian entries.
Even though $W$ is, itself, an $n \times n$ matrix (no $m$ involved), the joint 
eigenvalue 
has the form 
\[
\mu_W(\lambda_1, \dots \lambda_n) 
\propto e^{-1/2 \sum_i \lambda_i} 
\prod_i \lambda_i^{(m-n-1)/2} 
\prod_{i < j} |\lambda_i 
- \lambda_j|
\]
where the value of $m$ is considered to be a measure of ``degrees of freedom'' 
\cite{forrester}.
In this note, we will refer to the value of $m$ in this example as a 
{\em local} dimension (since the $m$ disappears after the product is taken) and 
to $n$ as a {\em global} dimension (since the product matrix still has $n$ as a 
dimension).
Similar to the case of the Wishart ensemble, polynomial convolutions will 
depend on both local and global parameters, and 
so the aim will be to find methods for computing both in the most general case 
possible.

To state the results explicitly, we first introduce some notations: let 
$\MM_{m, 
n}$ denote the collection of $m \times n$ matrices\footnote{All of 
results in this paper will hold true for any base field.
This is not the case for the random matrix ensembles that we will mention at 
various points, and so we will state the base field specifically in those 
discussions.}.
We will use the standard notation for multivariate polynomials: for $\alpha \in 
\N^k$, we write
\[
x^\alpha := \prod_{i=1}^k x_i^{\alpha_i}
\AND
\alpha! := \prod_{i=1}^k \alpha_i!
\]
Given two degree $n$ homogeneous polynomials
\[
p(x_1, \dots, x_k) = \sum_{\alpha \in \N^k} p_\alpha x^{\alpha}
\AND
q(x_1, \dots, x_k) = \sum_{\alpha \in \N^k} q_\alpha x^{\alpha},
\]
we define the $\star$-convolution of $p$ and $q$ to be\footnote{Note that this is {\em not} the same as the Schur--Hadamard convolution, which is defined by
\[
x^{\alpha} \bullet x^{\beta} = \frac{\delta_{\{\alpha = \beta\}}}{\binom{\eta}{\alpha}}
\]
where $\eta \in \N^k$ contains the maximum degree of each variable in $p$ and $q$ (see \cite{bb}).}

\begin{equation}\label{eq:conv}
[p \star q](x_1, \dots, x_k) = \frac{1}{n!}\sum_{\alpha \in \N^k} p_\alpha q_\alpha \alpha! x^{\alpha}.
\end{equation}

We also define an operator on multivariate polynomials that operates on pairs of variables (all other variables considered fixed). 
For integers $i, j, m$, we define
\begin{equation}\label{eq:L}
L_m^{x, y} [ x^i y^j ] = 
\begin{cases}
\frac{(m-i)!(m-j)!}{m!(m-i-j)!} x^i y^j & \text{for ${i + j \leq m}$} \\
0 & otherwise
\end{cases}
\end{equation}
and extend linearly to generic multivariate polynomials.
Our first major theorem shows the effect of averaging over a local dimension:
\begin{theorem}[Local]\label{thm:local}
For integers $d,m$ and variables $x,y$ let
\begin{itemize}
\item $R \in \MM_{m, m}$ be a uniformly distributed signed 
permutation matrix
\item $A_1, A_2 \in \MM_{d, m}$ and $B_1, B_2 \in \MM_{m, d}$ and $U \in 
\MM_{d, d}$ be matrices that are independent from $R$ and do not contain the 
variables $x$ and $y$ (but could contain other variables).
\end{itemize}
Then 
\[
\expect{R}{ \mydet{U + (x A_1 + y A_2 R)(B_1 + R^T B_2)}}
 = L_m^{x, y} 
\left\{ \mydet{U + x A_1 B_1 + y A_2 B_2} \right \} 
\]
\end{theorem}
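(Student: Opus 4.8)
The strategy is to collect all dependence on $R$ into a single $2m\times 2m$ matrix, expand the determinant once and for all in a way that leaves that matrix's minors exposed, and then observe that the theorem reduces to a per-minor identity. First I would rewrite the integrand in factored form. Put $\hat A := [\,A_1\mid A_2\,]\in\MM_{d,2m}$ and $\hat B := \left(\begin{smallmatrix}B_1\\B_2\end{smallmatrix}\right)\in\MM_{2m,d}$. Since $R$ is a signed permutation matrix, $RR^\trans=\ident_m$, and a direct block computation gives
\[
(xA_1+yA_2R)(B_1+R^\trans B_2)=\hat A\,\hat R\,\hat B,
\qquad
\hat R:=\begin{pmatrix} x\ident_m & xR^\trans\\ yR & y\ident_m\end{pmatrix},
\]
while likewise $xA_1B_1+yA_2B_2=\hat A\,D\,\hat B$ with $D:=\left(\begin{smallmatrix}x\ident_m & 0\\0 & y\ident_m\end{smallmatrix}\right)$. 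So both sides of the theorem are instances of $\mydet{U+\hat A M\hat B}$, for $M=\hat R$ on the left and $M=D$ on the right.

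Next I would prove a ``master'' expansion of $\mydet{U+\hat A M\hat B}$ that isolates the minors of $M$. Starting from the bordered-determinant identity $\mydet{U+\hat A M\hat B}=\det\left(\begin{smallmatrix}U & \hat A\\ -M\hat B & \ident_{2m}\end{smallmatrix}\right)$, a Laplace expansion along the first $d$ rows followed by one application of the Cauchy--Binet formula yields
\[
\mydet{U+\hat A M\hat B}=\sum_{W,W'\subseteq[2m]} c_{W,W'}\;\mydet{M_{W,W'}},
\]
where $c_{W,W'}$ is a signed sum of products of minors of $U$, $\hat A$ and $\hat B$; in particular $c_{W,W'}$ involves neither $x$, $y$ nor $R$ (and the sum is effectively over $|W|=|W'|$). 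All of these manipulations hold over an arbitrary commutative ring, so no invertibility or characteristic hypothesis is needed. Taking $M=D$ and using that a minor of a diagonal matrix vanishes off the diagonal gives $\mydet{U+xA_1B_1+yA_2B_2}=\sum_{W}c_{W,W}\,x^{|W\cap[m]|}y^{|W\setminus[m]|}$; since $L_m^{x,y}$ is linear and the $c_{W,W}$ are free of $x,y$, it now suffices to establish
\[
\expect{R}{\mydet{\hat R_{W,W'}}}=L_m^{x,y}\!\left\{\mydet{D_{W,W'}}\right\}
\qquad\text{for all }W,W'\subseteq[2m],
\]
because then $\expect{R}{\mydet{U+\hat A\hat R\hat B}}=\sum_{W,W'}c_{W,W'}L_m^{x,y}\{\mydet{D_{W,W'}}\}=L_m^{x,y}\{\mydet{U+\hat AD\hat B}\}$, which is the claimed right-hand side.

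It remains to evaluate the expected minor. Writing $\hat R=D\tilde R$ with $\tilde R:=\left(\begin{smallmatrix}\ident_m & R^\trans\\ R & \ident_m\end{smallmatrix}\right)$ and pulling out the diagonal factor, $\mydet{\hat R_{W,W'}}=x^{|W\cap[m]|}y^{|W\setminus[m]|}\,\mydet{\tilde R_{W,W'}}$. Conjugating $\tilde R$ by the block-diagonal sign matrices $\left(\begin{smallmatrix}\epsilon & 0\\0 & \ident_m\end{smallmatrix}\right)$ and $\left(\begin{smallmatrix}\ident_m & 0\\0 & \epsilon\end{smallmatrix}\right)$ realizes $R\mapsto R\epsilon$ and $R\mapsto\epsilon R$, transformations that leave the distribution of $R$ unchanged; tracking the scalar that each conjugation introduces into $\mydet{\tilde R_{W,W'}}$ forces $\expect{R}{\mydet{\tilde R_{W,W'}}}=0$ unless $W=W'$, matching $\mydet{D_{W,W'}}=0$. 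For $W=W'$, set $\alpha:=W\cap[m]$, $\beta:=\{i-m:i\in W,\ i>m\}$ and $S:=R_{\beta,\alpha}$ (the submatrix of $R$ on rows $\beta$, columns $\alpha$); then $\tilde R_{W,W}=\left(\begin{smallmatrix}\ident_{|\alpha|} & S^\trans\\ S & \ident_{|\beta|}\end{smallmatrix}\right)$, so $\mydet{\tilde R_{W,W}}=\mydet{\ident-SS^\trans}$. Expanding this into elementary symmetric functions of $SS^\trans$, applying Cauchy--Binet to each principal minor, and using $\expect{R}{(\det R_{J,K})^2}=\binom{m}{|J|}^{-1}$ (the probability that the permutation underlying $R$ carries $K$ onto $J$) gives $\expect{R}{\mydet{\tilde R_{W,W}}}=\sum_{l\ge0}(-1)^l\binom{|\alpha|}{l}\binom{|\beta|}{l}\binom{m}{l}^{-1}$. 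The proof then closes with the binomial identity $\sum_{l}(-1)^l\binom{p}{l}\binom{q}{l}\binom{m}{l}^{-1}=\frac{(m-p)!(m-q)!}{m!\,(m-p-q)!}$ (equivalently $\binom{m-p}{q}/\binom{m}{q}$, seen by inclusion--exclusion on the event that a uniform random $q$-subset of $[m]$ misses a fixed $p$-set), since with $p=|\alpha|$ and $q=|\beta|$ that value is exactly the coefficient defining $L_m^{x,y}\{x^{|\alpha|}y^{|\beta|}\}=L_m^{x,y}\{\mydet{D_{W,W}}\}$.

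The step I expect to be most delicate is the ``master'' expansion: keeping the signs straight through the block Laplace step, and verifying that the coefficients $c_{W,W'}$ genuinely decouple from $x$, $y$ and $R$, so that $L_m^{x,y}$ may be pulled through the sum. The distributional sign-change argument and the evaluation of $\expect{R}{(\det R_{J,K})^2}$ are the other points where the structure of the signed-permutation (``minor-orthogonal'') ensemble enters, but those are comparatively routine, as is the closing binomial identity.
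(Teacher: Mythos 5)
Your proof is correct, and it takes a genuinely different route from the paper's. The paper also reduces the theorem to a statement about individual minors, but it keeps $A_1,A_2,B_1,B_2$ in play throughout: it expands $[(xA_1+yA_2R)(B_1+R^\trans B_2)]_{S,T}$ with the minor-of-a-sum formula, evaluates the cross terms using the minor-orthogonality computation $\expect{R}{[A_1RB_1]_{S,T}[A_2R^\trans B_2]_{U,V}}$ (Lemma~\ref{lem:first}), repackages the result as a differential-operator identity (Corollary~\ref{cor:Dat1}), and then matches coefficients of $x^iy^{k-i}$ against $L_m^{x,y}$ via the same binomial identity you use. You instead factor all of the randomness into the single $2m\times 2m$ block matrix $\hat R$, so the theorem collapses to the universal per-minor identity $\expect{R}{\mydet{\hat R_{W,W'}}}=L_m^{x,y}\{\mydet{D_{W,W'}}\}$, which no longer mentions $U$, $A_i$, $B_i$ at all; the off-diagonal terms die by sign-flip invariance of the ensemble, and the diagonal ones reduce, via a Schur complement, to $\expect{R}{\mydet{\ident-SS^\trans}}$, which needs only the second moments $\expect{R}{[R]_{J,K}^2}=1/\binom{m}{|J|}$ and no correlations between distinct minors. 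That is a clean simplification, and your inclusion--exclusion form $\binom{m-p}{q}/\binom{m}{q}$ of the closing identity conveniently also covers the range $|\alpha|+|\beta|>m$, where it returns $0$ and thus matches the truncation built into $L_m^{x,y}$; you should state that case explicitly, since the factorial expression with $(m-p-q)!$ is not defined there. The trade-off is generality: the paper's argument uses only minor-orthogonality and therefore proves the theorem for every minor-orthogonal ensemble (as emphasized in Section~\ref{sec:local}), whereas your factorization uses $RR^\trans=\ident$ and your vanishing step uses invariance under one-sided sign flips, so your proof is tied to signed permutations (or groups containing them) --- sufficient for the theorem as stated, but not yielding the paper's more general version.
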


The second main theorem then shows the effect of averaging over a global dimension:
\begin{theorem}[Global]\label{thm:global}
For integers $n, d$, let $A_1, \dots, A_n, B_1, \dots, B_n \in \MM_{d,d}$ and 
set 
\[
p(x_1, \dots, x_n) = \mydet{ \sum_i x_i A_i }
\AND
q(x_1, \dots, x_n) = \mydet{ \sum_i x_i B_i }.
\]
If $Q \in \MM_{d, d}$ is a uniformly distributed signed permutation matrix, 
then 
\[
\expect{Q}{ \mydet{\sum_i x_i A_i Q B_i Q^T } } = [p \star q](x_1, \dots, x_n)
\]
\end{theorem}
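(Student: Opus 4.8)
The plan is to expand the determinant on the left-hand side using multilinearity and then compute the expectation over $Q$ term by term. Writing $M := \sum_i x_i A_i Q B_i Q^T$, we have a $d \times d$ matrix each of whose columns is a sum of $n$ vectors (one for each $i$). Expanding $\mydet{M}$ via multilinearity in the columns produces a sum indexed by functions $\phi : \{1, \dots, d\} \to \{1, \dots, n\}$, where the $k$-th column is taken from the term $x_{\phi(k)} A_{\phi(k)} Q B_{\phi(k)} Q^T$. Collecting powers of the $x_i$'s, the coefficient of $x^\alpha$ on the left (before taking expectations) is a sum over all $\phi$ with $|\phi^{-1}(i)| = \alpha_i$ for each $i$, of determinants of matrices whose columns are drawn from the various $A_{\phi(k)} Q B_{\phi(k)} Q^T$. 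The task is to show that $\expect{Q}{\cdot}$ of this sum equals $\frac{1}{n!} p_\alpha q_\alpha \alpha!$.

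The key computation is an orthogonality-type identity for the uniformly random signed permutation $Q$. Each surviving term is a product, over columns $k = 1, \dots, d$, of an expression linear in one column of $Q$ and one column of $Q^T$ (equivalently one row and one column of $Q$); so the whole term is a polynomial in the entries of $Q$ that, after grouping, is a $2d$-fold product of entries of $Q$. Taking $\expect{Q}{\cdot}$ over a uniformly random signed permutation annihilates any monomial that is not a ``matched'' product of squared entries $Q_{i\sigma(i)}^2$ along a permutation $\sigma$; and $\expect{Q}{Q_{1\sigma(1)}^2 \cdots Q_{d\sigma(d)}^2} = 1$ for each permutation $\sigma$ of $\{1,\dots,d\}$ (there are $d!$ of them, each contributing equally, giving the factor that will become $\alpha!/n!$ after the dust settles). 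This is exactly the ``minor-orthogonal'' property alluded to in the abstract, and I expect it to be packaged as a lemma earlier in the paper; I would invoke it to reduce the left-hand side to a sum over pairs of permutations (one from the $A$-side expansion, one from the $B$-side) that match up through $\sigma$.

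Carrying this out, after the expectation the double sum over column-index functions $\phi$ and permutations collapses: the $A$-part reassembles into $p_\alpha = $ (the coefficient of $x^\alpha$ in $\mydet{\sum_i x_i A_i}$) and the $B$-part into $q_\alpha$, with the combinatorial count of matchings producing precisely $\alpha!$, and the uniform averaging over $Q$ (the $1/|S_d \ltimes \{\pm1\}^d|$ normalization, reorganized) producing the $1/n!$. Comparing with the definition~(\ref{eq:conv}) of $p \star q$ finishes the proof.

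The main obstacle is the bookkeeping in the middle step: making the cancellation precise requires carefully tracking which monomials in the $Q$-entries survive the signed-permutation average, and then recognizing the two surviving index-permutations as exactly the data defining the Cauchy--Binet-type expansions of $p_\alpha$ and $q_\alpha$. In particular one must check that the ``cross terms'' — where the column of $Q$ used in the $A$-factor and the column of $Q^T$ used in the $B$-factor of the same summand do not align into squares — vanish in expectation, and that no spurious diagonal terms are double-counted. I would structure this as: (1) multilinear expansion; (2) apply the minor-orthogonality lemma to kill cross terms and evaluate the surviving second moments; (3) re-index the two surviving sums as coefficient extractions for $p$ and $q$; (4) match constants against~(\ref{eq:conv}). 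If a clean statement of minor-orthogonality is available from the earlier sections, steps (2)--(3) should be short; the risk is that the needed form of that lemma is stated for single minors rather than for the paired product structure arising here, in which case a brief auxiliary computation bridging the two will be needed.
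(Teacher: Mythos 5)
Your overall strategy---expand the determinant by multilinearity in the columns and then compute the resulting $2d$-fold moments of the signed permutation matrix directly---could in principle be carried out, but as written there is a genuine gap exactly where the content of the theorem lies. First, the one explicit moment identity you state is false: for a uniformly random $d\times d$ signed permutation matrix, $\expect{Q}{Q_{1\sigma(1)}^2\cdots Q_{d\sigma(d)}^2}=1/d!$ (the probability that the support of $Q$ is exactly $\sigma$), not $1$; correspondingly your accounting of where the normalization comes from is muddled (note also that the $n!$ in (\ref{eq:conv}) is the common \emph{degree} of $p$ and $q$, which in Theorem~\ref{thm:global} is $d$, not the number of variables, so the constant you must produce is $\alpha!/d!$). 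Second, and more seriously, the step you label ``bookkeeping''---that only fully matched square terms survive the average, that the two surviving index systems reassemble into the coefficient extractions $p_\alpha$ and $q_\alpha$, and that the matching count is exactly $\alpha!$---is the entire theorem, and it is asserted rather than proved. Finally, the lemma you hope to invoke is not available in the form you need: minor-orthogonality (Lemma~\ref{lem:sps}) controls products of \emph{two} minors $[Q]_{S,T}[Q^T]_{U,V}$, not the $2d$-fold products of individual entries your expansion generates; in fact the paper's own proof abandons minor-orthogonality at precisely this point (this is why Theorem~\ref{thm:global} is proved only for signed permutations), so the ``brief auxiliary computation'' you defer is in reality the main work.

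For comparison, the paper sidesteps this bookkeeping by a structural reduction you could graft onto your plan. By multilinearity it suffices to prove the equivalent mixed-discriminant identity $\expect{Q}{D(A_1QB_1Q^T,\dots,A_nQB_nQ^T)}=\tfrac{1}{n!}\,D(A_1,\dots,A_n)\,D(B_1,\dots,B_n)$ (Theorem~\ref{lem:separate}), and by multilinearity again one may take every $A_i=e_{w_i}e_{x_i}^T$ and $B_i=e_{y_i}e_{z_i}^T$. Lemma~\ref{lem:md} then factors the left-hand side as $\mydet{Q}\bigl(\prod_i e_{x_i}^TQe_{y_i}\bigr)$ times two determinants of standard basis vectors, so the only randomness left is the scalar average $\expect{Q}{\mydet{Q}\prod_i e_{x_i}^TQe_{y_i}}$: writing $Q=P_\pi E_\chi$, the sign average forces the $y_i$ (and hence the $x_i$) to be distinct, and the permutation average singles out $\pi=\sigma\circ\tau^{-1}$ with weight $1/n!$, yielding $\mydet{P_\sigma}\mydet{P_\tau}$. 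This rank-one reduction is what turns your surviving-monomial analysis into a short, precise computation with the correct constants; without it (or an equivalently careful direct moment analysis), your steps (2)--(4) remain unestablished.
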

These theorems can then be used iteratively to compute more complicated 
convolutions (we give examples of this in 
Sections~\ref{sec:nonHermitian}~and~\ref{sec:gsvd}).

The paper will proceed by first reviewing some of the basic combinatorial and 
linear algebraic tools that we will need (Section~\ref{sec:prelims}).
We will then introduce a type of random matrix ensemble which we call {\em 
minor-orthogonal} and prove some basic properties 
(Section~\ref{sec:minor-orthogonal}).
Among these properties will be the fact that a uniformly distributed signed 
permutation matrix is minor-orthogonal (Lemma~\ref{lem:sps})
In Section~\ref{sec:local}, we will give a proof of Theorem~\ref{thm:local} in 
the more general context of minor-orthogonal matrices.
Unfortunately, we are not able to prove Theorem~\ref{thm:global} in similar 
generality.
Instead, we present a proof of Theorem~\ref{thm:global} specific to signed 
permutation matrices in Section~\ref{sec:global}.
We give some examples of applications (reproducing known results) in 
Section~\ref{sec:apps} and then give the main application (the introduction of 
an additive convolution for generalized singular values) in 
Section~\ref{sec:gsvd}.
Finally, we discuss some open problems in Section~\ref{sec:conc}.

\section{The tools}

We start by giving the definitions and constructs that we will use.

\subsection{General}

For a statement $S$, we will use the Dirac delta function
\[
\delta_{ \{ S \} } = 
\begin{cases}
1 & \text{if $S$ is true} \\
0 & \text{if $S$ is false}
\end{cases}.
\]

We write $[n]$ to denote the set $\{ 1, \dots, n \}$ and for a set $S$, we write $\binom{S}{k}$ to denote the collection of subsets of $S$ that have exactly $k$ elements.
For example, 
\[
\binom{[4]}{2} = \big\{ \{ 1, 2 \}, \{ 1, 3 \}, \{ 1, 4 \}, \{ 2, 3 \}, \{ 2, 4 \}, \{ 3, 4 \} \big\}
\]

When our sets contain integers (which they always will), we will consider the set to be ordered from smallest to largest.
Hence, for example, if $S$ contains the elements $\{ 2, 5, 3 \}$, then we will write 
\[
S = \{ s_1, s_2, s_3 \} 
\quad
\text{where}
\quad
s_1 = 2, s_2 = 3, s_3 = 5.
\]
Now let $S = \{ s_1, \dots, s_k \} \in \binom{[n]}{k}$.
For a set $W \in \binom{[k]}{j}$ with $j \leq k$, we will write
\[
W(S) = \{ s_i : i \in W \}.
\]
Lastly, for a set of integers $S$, we will write 
\[
\| S \|_1 = \sum_{s \in S} s
\]
and note that (as is easy to check)
\[
(-1)^{\| S + T \|_1} = (-1)^{\| S\|_1 + \|T \|_1}.
\]

\begin{example}
For $W = \{ 1, 3 \}$ and $S = \{ 2, 4, 5 \}$ we have
\[
W(S) = \{ 2, 5 \}
\AND
\|W\|_1 = 1 + 3 = 4
\AND
\|S\|_1 = 2 + 4 + 5 = 11.
\]
\end{example}

\subsection{Matrices}
Given a matrix $A \in \MM_{n, n}$ and sets $S \in \binom{[n]}{k}$ and $T \in 
\binom{[m]}{k}$, we will write the {\em $(S, T)$-minor of $A$} as
\[
[A]_{S, T} = \mydet{ \{ a_{i, j} \}_{i \in S, j \in T}}
\]
By definition, we will set $[A]_{\varnothing, \varnothing} = 1$.
There are well-known formulas for the minor of a product of matrices 
\cite{horn} 
as well as the minor of a sum of matrices \cite{mm}:
\begin{theorem}
For integers $m, n, p, k$ and matrices $A \in \MM_{m, n}$ and $B \in \MM_{n, 
p}$, we have
\begin{equation}\label{eq:mult}
[A B]_{S, T} = \sum_{|U| \in \binom{[n]}{k}}
[A]_{S, U} [B]_{U, T}.
\end{equation}
for any sets $S \in \binom{[m]}{k}$ and $T \in \binom{[p]}{k}$.
\end{theorem}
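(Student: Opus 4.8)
The plan is to expand the left-hand side of \eqref{eq:mult} through the Leibniz-style formula for a minor and then regroup the terms according to the image of an auxiliary function, exactly as in the classical proof of the Cauchy--Binet identity. First I would write
\[
[AB]_{S,T} \;=\; \sum_{\sigma\in\mathrm{Sym}(k)} \mathrm{sgn}(\sigma)\prod_{\ell=1}^{k}(AB)_{s_\ell,\,t_{\sigma(\ell)}},
\]
substitute $(AB)_{s_\ell,\,t_{\sigma(\ell)}} = \sum_{j=1}^{n} a_{s_\ell,\,j}\,b_{j,\,t_{\sigma(\ell)}}$, and distribute the product over $\ell$. This turns the expression into a sum over all functions $f\colon[k]\to[n]$,
\[
[AB]_{S,T} \;=\; \sum_{f\colon[k]\to[n]} \left(\prod_{\ell=1}^{k} a_{s_\ell,\,f(\ell)}\right) \sum_{\sigma\in\mathrm{Sym}(k)} \mathrm{sgn}(\sigma)\prod_{\ell=1}^{k} b_{f(\ell),\,t_{\sigma(\ell)}}.
\]

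Next I would identify the inner sum. For fixed $f$ it is the determinant of the $k\times k$ matrix with $(\ell,\ell')$-entry $b_{f(\ell),\,t_{\ell'}}$, so it vanishes whenever $f$ is not injective (two equal rows). For injective $f$, let $U=\{u_1<\dots<u_k\}\in\binom{[n]}{k}$ be its image and let $\tau\in\mathrm{Sym}(k)$ be the unique permutation with $f(\ell)=u_{\tau(\ell)}$; permuting the rows of this matrix back into increasing order of their $U$-index shows the inner sum equals $\mathrm{sgn}(\tau)\,[B]_{U,T}$, and simultaneously $\prod_\ell a_{s_\ell,\,f(\ell)} = \prod_\ell a_{s_\ell,\,u_{\tau(\ell)}}$. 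Since $f\mapsto(U,\tau)$ is a bijection from the injections $[k]\to[n]$ onto $\binom{[n]}{k}\times\mathrm{Sym}(k)$, grouping the surviving terms first by $U$ and then by $\tau$ yields
\[
[AB]_{S,T} \;=\; \sum_{U\in\binom{[n]}{k}} [B]_{U,T}\sum_{\tau\in\mathrm{Sym}(k)} \mathrm{sgn}(\tau)\prod_{\ell=1}^{k} a_{s_\ell,\,u_{\tau(\ell)}} \;=\; \sum_{U\in\binom{[n]}{k}} [A]_{S,U}\,[B]_{U,T},
\]
the last equality being the Leibniz expansion of $[A]_{S,U}$. For $k=0$ both sides equal $1$ by the convention $[\,\cdot\,]_{\varnothing,\varnothing}=1$, and for $k>n$ both sides are $0$, so the boundary cases are consistent as well.

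The one point requiring care is the sign accounting in the second step: one must check that reordering the unsorted list $f(1),\dots,f(k)$ into the canonical increasing listing of $U$ contributes the \emph{same} permutation $\tau$, with the same sign, to the $B$-minor and to the $A$-product, so that after regrouping the $\tau$-sum reconstitutes $[A]_{S,U}$ exactly. Beyond that, the argument is a routine interchange of finite sums and products. (An essentially equivalent route is to invoke the functoriality $\wedge^{k}(AB)=\wedge^{k}(A)\wedge^{k}(B)$ of exterior powers and read off the entries of the corresponding compound matrices in the standard $\binom{[n]}{k}$-indexed basis; I avoid it here only because the paper develops no exterior algebra.)
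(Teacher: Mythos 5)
Your proof is correct. Note that the paper does not prove this statement at all: it is the classical Cauchy--Binet formula, quoted with a citation to \cite{horn}, so there is no in-paper argument to compare against. Your Leibniz-expansion argument is the standard proof and is complete: the inner sum over $\sigma$ is indeed the determinant of the matrix $\bigl(b_{f(\ell),t_{\ell'}}\bigr)_{\ell,\ell'}$, it vanishes for non-injective $f$, and for injective $f$ with image $U$ and $f=u\circ\tau$ the same $\mathrm{sgn}(\tau)$ that sorts the rows of the $B$-block is exactly the permutation left in the $A$-product, so the $\tau$-sum reconstitutes $[A]_{S,U}$; the bijection between injections and pairs $(U,\tau)$ and the boundary cases $k=0$ (convention $[\,\cdot\,]_{\varnothing,\varnothing}=1$) and $k>n$ (empty sum versus a minor of a rank-deficient product) are all handled correctly.
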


\begin{theorem}
For integers $n, k$ and matrices $A, B \in \MM_{n, n}$, we have
\begin{equation}\label{eq:add}
[A + B]_{S, T} = \sum_i \sum_{U, V \in \binom{[k]}{i}} 
(-1)^{\| U(S) + V(T) \|_1} 
[A]_{U(S), V(T)} 
[B]_{\overline{U}(S), \overline{V}(T)}
\end{equation}
for any sets $S, T \in \binom{[n]}{k}$.
\end{theorem}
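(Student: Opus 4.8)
The plan is to reduce \eqref{eq:add} to the case in which the ambient matrices already have size $k$, and then to settle that case by a column expansion followed by a Laplace expansion. By definition $[A+B]_{S,T}=\mydet{\widetilde A+\widetilde B}$, where $\widetilde A,\widetilde B\in\MM_{k,k}$ are the submatrices of $A$ and $B$ on rows $S$ and columns $T$. Since deleting rows and columns commutes, and the determinant of a submatrix depends only on the relative order of the indices chosen, every minor of $\widetilde A$ is a minor of $A$ with relabeled index sets: for any $i$ and $U,V\in\binom{[k]}{i}$ one has $[\widetilde A]_{U,V}=[A]_{U(S),V(T)}$, and likewise $[\widetilde B]_{\overline U,\overline V}=[B]_{\overline U(S),\overline V(T)}$. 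Hence it suffices to prove, for arbitrary $M,N\in\MM_{k,k}$,
\[
\mydet{M+N}=\sum_i\sum_{U,V\in\binom{[k]}{i}}(-1)^{\|U\|_1+\|V\|_1}\,[M]_{U,V}\,[N]_{\overline U,\overline V},
\]
and then substitute $\widetilde A,\widetilde B$ for $M,N$.

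To prove this displayed identity, I would first use multilinearity of the determinant in its columns: writing the $b$-th column of $M+N$ as the sum of the $b$-th columns of $M$ and of $N$, and expanding over all $2^k$ choices, gives $\mydet{M+N}=\sum_{V\subseteq[k]}\mydet{M_V}$, where $M_V$ is the matrix whose columns indexed by $V$ are those of $M$ and whose remaining columns are those of $N$. Next, apply the generalized Laplace expansion of $\mydet{M_V}$ along the block of columns $V$, obtaining $\sum_{U\in\binom{[k]}{|V|}}(-1)^{\|U\|_1+\|V\|_1}[M_V]_{U,V}[M_V]_{\overline U,\overline V}$. Since the columns of $M_V$ in $V$ coincide with those of $M$ and those in $\overline V$ coincide with those of $N$, we have $[M_V]_{U,V}=[M]_{U,V}$ and $[M_V]_{\overline U,\overline V}=[N]_{\overline U,\overline V}$; collecting the outer sums by $i:=|V|$ then yields exactly the identity above, and substituting back recovers \eqref{eq:add}.

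The step I expect to be the real obstacle is the sign bookkeeping. One must first pin down that the cofactor sign in the generalized Laplace expansion along the columns $V$ with complementary rows $\overline U$ is precisely $(-1)^{\|U\|_1+\|V\|_1}$ — this is the count of transpositions needed to bring the rows of $U$ and the columns of $V$ into the leading $i\times i$ block — and then reconcile this sign (in which $U$ and $V$ are subsets of $[k]$) with the form displayed in \eqref{eq:add} after the relabeling $U\mapsto U(S)$, $V\mapsto V(T)$; the parity rule $(-1)^{\|X+Y\|_1}=(-1)^{\|X\|_1+\|Y\|_1}$ recorded earlier is what one uses to rewrite the exponent. An alternative route that avoids the Laplace expansion altogether is to start from the Leibniz expansion of $\mydet{\widetilde A+\widetilde B}$, expand each product of $k$ entries over the choice of which factor is drawn from $A$, group the permutations $\sigma$ according to the image $V$ of the set $U$ of ``$A$-positions'', and use the factorization $\mathrm{sgn}(\sigma)=(-1)^{\|U\|_1+\|V\|_1}\mathrm{sgn}(\sigma|_U)\mathrm{sgn}(\sigma|_{\overline U})$; the two inner sums that remain then reassemble into $[A]_{U(S),V(T)}$ and $[B]_{\overline U(S),\overline V(T)}$. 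Once the signs are settled, the identity follows immediately either way.
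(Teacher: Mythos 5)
Your argument is sound in substance, and note that the paper itself offers no proof of \eqref{eq:add} at all --- it is quoted as a known identity with a citation to M.~Marcus's paper on determinants of sums --- so any complete argument is additional to the text. Your route (pass to the $k\times k$ submatrices $\widetilde A,\widetilde B$, expand the determinant column-by-column over the $2^k$ choices of taking each column from $\widetilde A$ or $\widetilde B$, then apply the generalized Laplace expansion by complementary minors along the chosen column set) is exactly the standard proof, and each step you describe, including $[\widetilde A]_{U,V}=[A]_{U(S),V(T)}$, is correct.

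The one place you go wrong is the final ``reconciliation'' of signs. What your argument actually produces is
\[
[A+B]_{S,T}=\sum_i\sum_{U,V\in\binom{[k]}{i}}(-1)^{\|U\|_1+\|V\|_1}\,[A]_{U(S),V(T)}\,[B]_{\overline{U}(S),\overline{V}(T)},
\]
with the exponent determined by the \emph{positions} $U,V\subseteq[k]$. The parity rule $(-1)^{\|X+Y\|_1}=(-1)^{\|X\|_1+\|Y\|_1}$ only splits the sum of two sets; it cannot convert $\|U\|_1$ into $\|U(S)\|_1$, since these differ by $(-1)^{\sum_{i\in U}(s_i-i)}$, which need not be $+1$. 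Indeed the label-based sign as literally printed in \eqref{eq:add} cannot be right: for $n=2$, $k=1$, $S=\{1\}$, $T=\{2\}$ it would give $b_{12}-a_{12}$ instead of $a_{12}+b_{12}$. The exponent in \eqref{eq:add} must be read positionally (equivalently, as the determinant-of-sums identity applied to the $k\times k$ matrices $\widetilde A,\widetilde B$), which is also how the paper uses it later: in the proof of Lemma~\ref{lem:second} the sign appears as $(-1)^{\|W+X\|_1}$ with $W,X\in\binom{[k]}{i}$. So your proof is complete once you stop at the displayed identity above; simply drop the claim that the parity rule bridges the two exponents, since they genuinely differ except when positions and labels coincide (e.g.\ $S=T=[k]$).
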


We will also make use of the {\em mixed discriminant}: for an integer $n$, let 
$X_1, \dots, X_n \in \MM_{n, n}$.
The {\em mixed discriminant} of these matrices is then defined as
\[
D(X_1, \dots, X_n) = \frac{1}{n!}\frac{\partial^n}{(\partial_{t_1}) \dots (\partial_{t_n})}~ \mydet{ \sum_i t_i X_i}
\] 
Note that $\mydet{ \sum_i t_i X_i}$ is a degree $n$ homogeneous polynomial, so the derivative is precisely the coefficient of $t^\alpha$ where $\alpha = (1, 1, 1, \dots, 1)$.

The mixed discriminant has the following well-known properties (all following directly from the definition):
\begin{lemma}\label{lem:md}
Let $X_1, \dots, X_n, Y \in \MM_{n, n}$ and let $\{u_i \}_{i=1}^n, \{ v_i 
\}_{i=1}^n$ be vectors of length $n$. 
Then
\begin{enumerate}
\item $D(a X_1 + b Y, X_2, \dots, X_n) = a D(X_1, X_2, \dots, X_n) + b D(Y, X_2, \dots, X_n)$ for all scalars $a, b$
\item $D(X_1, X_2, \dots, X_n) = D(X_{\pi(1)}, X_{\pi(2)}, \dots, X_{\pi(n)})$ for all permutations $\pi$
\item 
$D(X_1 Y, X_2 Y, \dots, X_n Y) 
= D(Y X_1, Y X_2, \dots, Y X_n)
= \mydet{Y} D(X_1, X_2, \dots, X_n)$ 
\item $D(u_1 v_1^T, u_2 v_2^T, \dots, u_n v_n^T) = \mydet{ u_1~u_2~\dots~u_n} \mydet{v_1~v_2~\dots~v_n}$
\end{enumerate}
\end{lemma}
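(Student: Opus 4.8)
The statement to prove is Lemma~\ref{lem:md}, the four basic properties of the mixed discriminant. The plan is to derive each of the four items directly from the definition
\[
D(X_1, \dots, X_n) = \frac{1}{n!}\frac{\partial^n}{(\partial_{t_1}) \cdots (\partial_{t_n})}\, \mydet{\textstyle\sum_i t_i X_i},
\]
using elementary properties of the determinant as a polynomial in the $t_i$, so each argument is a short computation rather than anything structural. Since the paper itself flags these as ``all following directly from the definition,'' I expect the proof to be brief; the main conceptual point to get right is the bookkeeping of the multilinear derivative.

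For item~1, I would observe that $\mydet{aX_1 + bY + \sum_{i\geq 2} t_i X_i}$, viewed as a function of a single new pair of formal variables $(s,r)$ via the substitution $t_1 X_1 \mapsto s X_1 + r Y$, is linear in $t_1$ along the $X_1$-direction; more directly, the determinant $\mydet{t_1 X_1 + \sum_{i\geq 2} t_i X_i}$ is already homogeneous of degree one in $t_1$ (the monomial $t^{(1,\dots,1)}$ we extract has $t_1$ to the first power), so $\partial_{t_1}$ acts linearly and additively on the $X_1$-slot, giving the claimed bilinearity after substituting $t_1=1$. For item~2, symmetry in the $X_i$ is immediate because the mixed partial derivative $\partial_{t_1}\cdots\partial_{t_n}$ is symmetric in the $t_i$ and the determinant $\mydet{\sum_i t_i X_i}$ is invariant under simultaneously permuting $(t_i, X_i)$ pairs.

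For item~3, I would use multiplicativity of the determinant: $\mydet{\sum_i t_i X_i Y} = \mydet{\left(\sum_i t_i X_i\right) Y} = \mydet{\sum_i t_i X_i}\mydet{Y}$, so the polynomial in $t$ is just scaled by the constant $\mydet{Y}$, and extracting the $t^{(1,\dots,1)}$ coefficient yields $\mydet{Y} D(X_1,\dots,X_n)$; the $Y X_i$ case is identical with $Y$ factored on the left. For item~4, the key identity is that $\sum_i t_i u_i v_i^T = \big(u_1~\cdots~u_n\big)\,\diag(t_1,\dots,t_n)\,\big(v_1~\cdots~v_n\big)^T$, so by multiplicativity its determinant equals $\mydet{u_1~\cdots~u_n}\big(\prod_i t_i\big)\mydet{v_1~\cdots~v_n}$; the coefficient of $t_1 t_2 \cdots t_n$ in this is exactly $\mydet{u_1~\cdots~u_n}\mydet{v_1~\cdots~v_n}$, and dividing by $n!$ is cancelled by the $n!$ coming from $\partial^n/\partial t_1\cdots\partial t_n$ applied to $\prod_i t_i$. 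The only mild obstacle anywhere is making sure the normalizing $1/n!$ and the factorials from differentiating a multilinear monomial cancel correctly; once that is checked in item~4 the rest is routine, and I would state it once and reuse it.
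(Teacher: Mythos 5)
Your overall route (read each item off the definition as extraction of the $t_1\cdots t_n$ coefficient of $\mydet{\sum_i t_i X_i}$) is exactly the intended one; the paper in fact states the lemma without proof, so items 1--3 as you argue them are fine: symmetry of the mixed partial gives item 2, multiplicativity of the determinant gives item 3, and for item 1 the relevant point (which you state a bit loosely --- the determinant is \emph{not} homogeneous of degree one in $t_1$, but the extracted monomial $t_1 t_2\cdots t_n$ only sees the part of degree one in $t_1$, which is linear in the $X_1$-slot) is correct.

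The genuine problem is the constant in item 4, precisely the bookkeeping you flagged as the only obstacle. Your claimed cancellation is false: $\frac{\partial^n}{\partial t_1\cdots \partial t_n}\left( t_1 t_2\cdots t_n \right) = 1$, not $n!$ (each $\partial_{t_i}$ hits the single factor $t_i$ once; the paper itself notes that the mixed partial is \emph{precisely} the coefficient of $t_1\cdots t_n$). So from $\mydet{\sum_i t_i u_i v_i^T} = \mydet{u_1~\cdots~u_n}\,\mydet{v_1~\cdots~v_n}\prod_i t_i$, the definition as printed (with the $\tfrac{1}{n!}$ prefactor) gives $D(u_1v_1^T,\dots,u_nv_n^T) = \tfrac{1}{n!}\mydet{u_1~\cdots~u_n}\mydet{v_1~\cdots~v_n}$, not the stated right-hand side. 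What is actually going on is a normalization inconsistency in the paper: item 4, and the later assertion $D(I,\dots,I)=n!$ used before Theorem~\ref{lem:separate}, correspond to the convention in which the $\tfrac{1}{n!}$ is \emph{omitted} from the definition (equivalently $D$ is the raw coefficient of $t_1\cdots t_n$), whereas the displayed definition includes it. Your write-up should either adopt the no-$\tfrac{1}{n!}$ convention explicitly (which makes item 4 and the rest of the paper consistent) or carry the $\tfrac{1}{n!}$ honestly through item 4; as written, you invented a factor of $n!$ to force agreement, and that step would not survive checking.
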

Note that properties {\em 1.} and {\em 2.} combine to show that the mixed 
discriminant is multilinear (that is, it is a linear function with respect to 
each of its inputs).

We will leave the discussion of generalized singular values to 
Section~\ref{sec:gsvd}.

\label{sec:prelims}
\section{Minor-orthogonality}

\newcommand{\mydel}[1]{\delta_{ \{ #1 \} } }
\newcommand{\minor}[3]{[#1]_{#2, #3}}

\newcommand{\mm}{m}
\newcommand{\nn}{n}

\newcommand{\cc}{c}
\newcommand{\dd}{d}

\newcommand{\bin}{ \frac{1}{\binom{ \max \{ \mm, \nn \} }{k}}}

We will say that a random matrix $R \in \MM_{\mm, \nn}$ is {\em 
minor-orthogonal} if for all integers $k, \ell \leq \min \{ \mm, \nn \}$ and 
all sets $S, T, U, V$ with $|S| = |T| = k$ and $|U| = |V| = \ell$, we have
\[
\expect{R}{\minor{R}{S}{T}\minor{R^T}{U}{V}} 
= \bin \mydel{S = V}\mydel{T = U}.
\]
Given a minor-orthogonal ensemble $R$ it is easy to see from the definition that
\begin{enumerate} 
\item $R^T$ is minor orthogonal
\item any submatrix that preserves the largest dimension of $R$ is minor 
orthogonal
\end{enumerate}

\begin{lemma}\label{lem:rot}
If $R$ is minor-orthogonal and $Q$ is a fixed matrix for which $QQ^T = I$, then 
$QR$ is minor-orthogonal.
\end{lemma}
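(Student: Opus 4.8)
The plan is to verify the defining identity of minor-orthogonality directly for $QR$, reducing everything to the corresponding identity for $R$ via the Cauchy--Binet formula \eqref{eq:mult}. Write $R \in \MM_{m,n}$, and recall $Q$ is a fixed matrix with $QQ^T = I$. Note that this forces $Q$ to have at least as many columns as rows, so $Q \in \MM_{p, m}$ for some $p \le m$, and $QR \in \MM_{p, n}$; the relevant ``largest dimension'' is $\max\{p, n\}$, and since $p \le m$ we have $\max\{p,n\} \le \max\{m,n\}$. The first thing I would do is pin down these size bookkeeping details, because the normalization $\bin$ in the definition depends on $\max\{m,n\}$, and I want to make sure that passing from $R$ to $QR$ does not change which dimension is the larger one in the cases we care about.

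The main computation is as follows. Fix $k,\ell$ and sets $S,T$ of size $k$ and $U,V$ of size $\ell$ (now $S,V \subseteq [p]$ and $T,U \subseteq [n]$). Using \eqref{eq:mult} twice,
\[
\minor{QR}{S}{T} = \sum_{S' \in \binom{[m]}{k}} \minor{Q}{S}{S'}\,\minor{R}{S'}{T},
\qquad
\minor{(QR)^T}{U}{V} = \sum_{V' \in \binom{[m]}{\ell}} \minor{R^T}{U}{V'}\,\minor{Q^T}{V'}{V},
\]
where I have used $(QR)^T = R^T Q^T$ and $\minor{Q^T}{V'}{V} = \minor{Q}{V}{V'}$. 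Multiplying and taking $\expect{R}{\cdot}$ (the $Q$-minors are constants and pull out), minor-orthogonality of $R$ gives
\[
\expect{R}{\minor{QR}{S}{T}\minor{(QR)^T}{U}{V}}
= \frac{1}{\binom{\max\{m,n\}}{k}}\,\mydel{k = \ell}\,\mydel{T = U}\sum_{S' \in \binom{[m]}{k}} \minor{Q}{S}{S'}\,\minor{Q}{V}{S'}.
\]
The inner sum is exactly $\minor{QQ^T}{S}{V}$ by \eqref{eq:mult} applied to $Q \cdot Q^T$, and since $QQ^T = I$ this is $\mydel{S = V}$. So the right-hand side becomes $\frac{1}{\binom{\max\{m,n\}}{k}}\mydel{k=\ell}\mydel{T=U}\mydel{S=V}$, which is the minor-orthogonality identity for $QR$ provided $\binom{\max\{m,n\}}{k} = \binom{\max\{p,n\}}{k}$ on the support of the delta functions.

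The step I expect to be the real obstacle — or at least the one needing the most care — is exactly that last normalization matching: reconciling $\max\{m,n\}$ with $\max\{p,n\}$. When $n \ge m$ this is automatic since then $n = \max\{m,n\} = \max\{p,n\}$. When $m > n$ the binomial coefficients differ in general, so one needs the delta factors to kill the discrepancy: $\mydel{S=V}$ forces $S = V \subseteq [p]$, and for $\minor{QQ^T}{S}{S} \ne 0$ we need $k \le p$, so on the support $\binom{\max\{m,n\}}{k}$ and $\binom{\max\{p,n\}}{k}$ can only disagree if $k$ exceeds one but not the other — a case I would rule out by observing that $\minor{Q}{S}{S'}$ can be nonzero only when $k \le \operatorname{rank}(Q) = p$, and similarly the $R$-minor identity is only invoked for $k \le \min\{m,n\}$. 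I would handle this by carefully stating the definition's range of $k$ and checking these corner cases; the cleanest route is probably to note that both sides vanish unless $k \le \min\{p, n\}$, and on that range $\max\{p,n\} $ and $\max\{m,n\}$ play interchangeable roles in the formula because the $\mydel{S=V}$ has already collapsed the $Q$-part to the identity minor. Once that is pinned down, the proof is complete.
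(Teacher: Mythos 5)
Your argument is correct and is essentially the paper's own proof: expand the minors of $QR$ and $(QR)^T$ via the Cauchy--Binet formula (\ref{eq:mult}), pull the fixed $Q$-minors out of the expectation, apply minor-orthogonality of $R$, and collapse the remaining sum to $[QQ^T]_{S,V}=\delta_{\{S=V\}}$. The digression about rectangular $Q$ is not needed: the lemma is used with $Q$ square of the same size as the relevant dimension of $R$, so $\max\{p,n\}=\max\{m,n\}$ and the normalization mismatch you worry about does not arise (and for genuinely rectangular $Q$ with $m>\max\{p,n\}$ the statement would in fact require the modified normalization, so your claim that the two binomials are interchangeable on the support should not be asserted in general).
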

\begin{proof}
For any sets $S, T$ with $|S| = |T| = k$, we have
\[
\minor{QR}{S}{T} = \sum_{|W| = k} \minor{Q}{S}{U} \minor{R}{U}{T}
\]
so for $|U| = |V| = \ell$, we have
\begin{align*}
\expect{R}{\minor{QR}{S}{T}\minor{(QR)^T}{U}{V}} 
&= 
\expect{R}{ 
\sum_{|W| = k}\sum_{|Z| = \ell} 
\minor{Q}{S}{W} \minor{R}{W}{T} 
\minor{R^T}{U}{Z} \minor{Q^T}{Z}{V} 
}
\\&= 
\sum_{|W| = k}\sum_{|Z| = \ell} \minor{Q}{S}{W} \minor{Q^T}{Z}{V} 
\bin \mydel{W = Z} \mydel{T = U}
\\&= 
\sum_{|W| = k} \bin \minor{Q}{S}{W} 
\minor{Q^T}{W}{V} \mydel{T = U}
\\&=
\bin
\mydel{S = V}\mydel{T = U}.
\end{align*}
where the last line comes from the fact that $\minor{I}{S}{V} = \mydel{S = V}$.
\end{proof}

\begin{lemma}\label{lem:sps}
The collection of $\nn \times \nn$ signed permutation matrices (under the 
uniform distribution) is minor-orthogonal.
\end{lemma}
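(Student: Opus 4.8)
The plan is to verify the defining identity of minor-orthogonality directly, using the explicit combinatorial description of a signed permutation matrix. Write $R = D P$ where $P$ is the permutation matrix of a uniformly random $\pi \in S_\nn$ and $D = \diag(\varepsilon_1, \dots, \varepsilon_\nn)$ with the $\varepsilon_i$ independent uniform signs, independent of $\pi$. Since $\nn = \mm$ here, the normalization $\bin$ is just $1/\binom{\nn}{k}$, and we must show $\expect{R}{\minor{R}{S}{T}\minor{R^T}{U}{V}} = \tfrac{1}{\binom{\nn}{k}} \mydel{S = V}\mydel{T = U}$ for $|S|=|T|=k$, $|U|=|V|=\ell$.

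First I would compute a single minor $\minor{R}{S}{T}$. Because each row of $R$ has exactly one nonzero entry, the submatrix $\{r_{i,j}\}_{i\in S, j\in T}$ has nonzero determinant only when $\pi$ maps $S$ bijectively onto $T$ (i.e. $\pi(S) = T$), in which case $\minor{R}{S}{T} = \pm \prod_{i \in S}\varepsilon_i$, the sign being the signature of the induced bijection $S \to T$. So $\minor{R}{S}{T} = \sigma_{S,T}(\pi)\,\mydel{\pi(S)=T}\prod_{i\in S}\varepsilon_i$ for an appropriate sign $\sigma_{S,T}(\pi) \in \{\pm 1\}$; likewise $\minor{R^T}{U}{V} = \sigma_{V,U}(\pi)\,\mydel{\pi(V)=U}\prod_{i\in V}\varepsilon_i$, since $(R^T)_{U,V}$ is the transpose of $R$ restricted to rows $V$, columns $U$. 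Then I would take the product and split the expectation using independence of $D$ and $P$:
\[
\expect{R}{\minor{R}{S}{T}\minor{R^T}{U}{V}}
= \expect{\pi}{ \sigma_{S,T}(\pi)\sigma_{V,U}(\pi)\,\mydel{\pi(S)=T}\mydel{\pi(V)=U} }
\cdot \expect{\varepsilon}{ \prod_{i\in S}\varepsilon_i \prod_{i\in V}\varepsilon_i }.
\]
The sign expectation is the crux of the sign-cancellation: $\expect{\varepsilon}{\prod_{i \in S \triangle V}\varepsilon_i} = \mydel{S = V}$, because a nontrivial product of independent uniform signs has mean zero. This already forces $S = V$.

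With $S = V$ imposed, the events $\pi(S) = T$ and $\pi(S) = U$ together force $T = U$, giving the factor $\mydel{T = U}$. It remains to evaluate $\expect{\pi}{\sigma_{S,T}(\pi)^2 \,\mydel{\pi(S)=T}}$ under $S = V$, $T = U$; but $\sigma_{S,T}(\pi)^2 = 1$, so this is just $\Pr_\pi[\pi(S) = T]$. The number of permutations of $[\nn]$ carrying the fixed $k$-set $S$ onto the fixed $k$-set $T$ is $k!\,(\nn-k)!$, so this probability is $k!(\nn-k)!/\nn! = 1/\binom{\nn}{k}$, which is exactly $\bin$. Combining the three factors yields the claimed formula. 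The main obstacle — really the only subtlety — is bookkeeping the induced-bijection sign $\sigma_{S,T}(\pi)$ carefully enough to see it appears squared (hence disappears); everything else is the routine observation that off-support minors vanish and that independent signs have mean zero. One could alternatively avoid naming $\sigma_{S,T}$ at all by noting $\minor{R}{S}{T}\minor{R^T}{U}{V}$, once $S=V$ and $T=U$, equals $\det$ of a $k\times k$ signed permutation matrix times its transpose, i.e. $\det$ of a $k \times k$ identity-like matrix, which is $1$; this is the cleaner route and the one I would ultimately write up.
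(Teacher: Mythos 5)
Your proof is correct and follows essentially the same route as the paper: decompose the signed permutation matrix into a diagonal sign matrix times a permutation matrix, use independence to split the expectation, observe that the sign expectation forces $S=V$, that the support of the permutation minors forces $T=U$, and that $\Pr_\pi[\pi(S)=T]=1/\binom{\nn}{k}$. The only cosmetic difference is that you argue directly from the one-nonzero-entry-per-row structure, whereas the paper expands via the Cauchy--Binet product formula before collapsing the diagonal minors; the content is identical.
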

\begin{proof}
We can write a uniformly random signed permutation matrix $Q$ as $Q = E_\chi 
P_\pi$ where $P_\pi$ is a uniformly random permutation matrix and $E_\chi$ is a 
uniformly random diagonal matrix with $\{ \pm 1 \}$ on the diagonal (and the 
two are independent).
Hence for $|S| = |T| = k$ and $|U| = |V| = \ell$, we have
\begin{align*}
\expect{Q}{\minor{Q}{S}{T}\minor{Q^T}{U}{V}} 
&=
\expect{\chi, \pi}{\minor{E_\chi P_\pi}{S}{T} \minor{ P_\pi^T E_\chi}{U}{V}} 
\\&= 
\sum_{|W| = k}\sum_{|Z| = \ell} \expect{\chi, \pi}{\minor{E_\chi}{S}{W} 
\minor{P_\pi}{W}{T} \minor{ P_\pi^T }{U}{Z} \minor{E_\chi}{Z}{V}}.
\\&= 
\expect{\chi, \pi}{\minor{E_\chi}{S}{S} \minor{P_\pi}{S}{T} \minor{ P_\pi^T 
}{U}{V} \minor{E_\chi}{V}{V}}
\\&=
\expect{\chi}{ \prod_{i \in S} \chi_i \prod_{j \in V} \chi_j}
\expect{\pi}{\minor{P_\pi}{S}{T} \minor{ P_\pi^T }{U}{V}}.
\end{align*}
where the penultimate line uses the fact that a diagonal matrix $X$ satisfies 
$\minor{X}{A}{B} = 0$ whenever $A \neq B$.
Now the $\chi_i$ are uniformly distributed $\{ \pm 1 \}$ random variables, so 
\[
\expect{\chi}{ \prod_{i \in S} \chi_i \prod_{j \in V} \chi_j} = \mydel{S = V}
\]
and so we have
\begin{align*}
\expect{Q}{\minor{Q}{S}{T}\minor{Q^T}{U}{V}} 
&= 
\expect{\pi}{\minor{P_\pi}{S}{T} \minor{ P_\pi^T }{U}{V}} \mydel{S = V}
\\&=
\expect{\pi}{\minor{P_\pi}{S}{T} \minor{ P_\pi }{S}{U}} \mydel{S = V}
\end{align*}
Furthermore, $\minor{P_\pi}{S}{T} = 0$ except when $T = \pi(S)$, so in order 
for both $\minor{P_\pi}{S}{T}$ and $\minor{ P_\pi }{S}{U}$ to be nonzero 
simultaneously requires $U = T$.
In the case that $U = T = \pi(S)$, $\minor{P_\pi}{S}{T} = \pm 1$, and so we have
\begin{align*}
\expect{Q}{\minor{Q}{S}{T}\minor{Q^T}{U}{V}} 
&= 
\expect{\pi}{\minor{P_\pi}{S}{T}^2} \mydel{S = V} \mydel{T = U}
\\&= 
\expect{\pi}{\mydel{\pi(S) = T}} \mydel{S = V} \mydel{T = U}
\end{align*}

But it is an easy exercise to check that the probability that a permutation 
length $\nn$ maps a set $S$ to a set $T$ with $|S| = |T| = k$ is 
\[
\frac{k!(\nn-k)!}{\nn!} = \frac{1}{\binom{\nn}{k}}
\]
and so for $|S| = |T| = k$, we have
\[
\expect{Q}{\minor{Q}{S}{T}\minor{Q^T}{U}{V}}  
= \frac{1}{\binom{\nn}{k}}  \mydel{S = V} \mydel{T = U}
\]
as required.
\end{proof}

As mentioned at the end of Section~\ref{sec:intro}, Lemma~\ref{lem:sps} can be 
extended to show that any suitably symmetric group containing the signed 
permutation matrices is minor-orthogonal.
An example of this is given in Corollary~\ref{cor:mo}:

\begin{corollary}\label{cor:mo}
The collection of $\nn \times \nn$ orthogonal matrices (under the 
Haar measure) is minor-orthogonal.
\end{corollary}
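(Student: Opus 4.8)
The plan is to obtain Corollary~\ref{cor:mo} from Lemma~\ref{lem:sps} and Lemma~\ref{lem:rot} by using the translation invariance of the Haar measure to ``wrap'' a Haar-random orthogonal matrix into the setting of those two lemmas. I would introduce two independent random matrices: $H \in \MM_{n,n}$ distributed according to Haar measure on the orthogonal group, and $Q \in \MM_{n,n}$ a uniformly distributed signed permutation matrix. The key observation is that the product $HQ$ can be analyzed in two complementary ways.

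On one hand, since $Q$ is orthogonal and Haar measure on $O(n)$ is invariant under right multiplication by a fixed orthogonal matrix, conditioning on the value of $Q$ shows that $HQ$ is again Haar-distributed, so that for all $S,T$ with $|S|=|T|=k$ and all $U,V$ with $|U|=|V|=\ell$,
\[
\expect{H}{\minor{H}{S}{T}\minor{H^T}{U}{V}} = \expect{HQ}{\minor{HQ}{S}{T}\minor{(HQ)^T}{U}{V}}.
\]
On the other hand, I would expand the right-hand side by conditioning on $H$ instead: using the independence of $Q$ and $H$ together with the tower property, it equals $\expect{H}{\expect{Q}{\minor{hQ}{S}{T}\minor{(hQ)^T}{U}{V}}}$, where $h$ denotes the (fixed) realized value of $H$. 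For each such $h$ we have $hh^T = I$, and $Q$ is minor-orthogonal by Lemma~\ref{lem:sps}, so Lemma~\ref{lem:rot} (applied to the minor-orthogonal matrix $Q$ and the fixed orthogonal matrix $h$) shows that $hQ$ is minor-orthogonal; that is, the inner expectation equals $\frac{1}{\binom{n}{k}}\mydel{S=V}\mydel{T=U}$ regardless of $h$. Substituting back and combining with the previous display yields $\expect{H}{\minor{H}{S}{T}\minor{H^T}{U}{V}} = \frac{1}{\binom{n}{k}}\mydel{S=V}\mydel{T=U}$, which is precisely minor-orthogonality of $H$.

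I do not expect a genuine obstacle: the argument is a two-sided averaging trick whose only delicate points are orientation and bookkeeping — one must invoke right-invariance (rather than left-invariance) of the Haar measure so that the fixed orthogonal matrix ends up on the left of $Q$, matching the form required by Lemma~\ref{lem:rot}, and one should note that the normalization $\binom{n}{k}$ in the definition of minor-orthogonality is indeed $\binom{\max\{m,n\}}{k}$ here, since all of the matrices involved are square $n \times n$. The interchanges of expectation are harmless, as every minor is a polynomial in the bounded entries of $H$ and $Q$. It is worth remarking that the argument uses nothing about $O(n)$ beyond its being a compact group that contains the signed permutation matrices and on which a bi-invariant probability measure exists, which is exactly the ``suitably symmetric group'' phenomenon alluded to after Lemma~\ref{lem:sps}.
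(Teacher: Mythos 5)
Your proposal is correct and follows essentially the same route as the paper's own proof: use right-invariance of Haar measure to replace $H$ by $HQ$ with $Q$ an independent signed permutation matrix, then condition on $H$ and apply Lemma~\ref{lem:sps} together with Lemma~\ref{lem:rot} (with the fixed orthogonal realization of $H$ on the left) to evaluate the inner expectation. No gaps; the bookkeeping remarks about right-invariance and the square-matrix normalization match the paper's argument.
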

\begin{proof}
Let $R$ be a Haar distributed random orthogonal matrix.
By definition, $RQ$ is also Haar distributed for any fixed orthogonal matrix 
$Q$, and so (in particular) this holds when $Q$ is a signed permutation matrix. 
Hence by Lemma~\ref{lem:rot}
\[
\expect{R}{\minor{R}{S}{T}\minor{R^T}{U}{V}} 
=
\expect{R}{\minor{R Q}{S}{T}\minor{(RQ)^T}{U}{V}} 
\]
and so 
\[
\expect{R}{\minor{R}{S}{T}\minor{R^T}{U}{V}} 
=
\expect{R, Q}{\minor{R}{S}{T}\minor{R^T}{U}{V}} 
=
\expect{R, Q}{\minor{RQ}{S}{T}\minor{(RQ)^T}{U}{V}}
\]
where we are now considering $Q$ to be drawn uniformly and independently from 
the 
collection of signed permutation matrices.
By Lemma~\ref{lem:sps}, $Q$ is minor-orthogonal and so, for fixed $R$,  
Lemma~\ref{lem:rot} implies that $RQ$ is minor-orthogonal. 
So 
\[
\expect{R}{\expect{Q}{\minor{RQ}{S}{T}\minor{(RQ)^T}{U}{V}}}
= 
\expect{R}{ \frac{1}{\binom{n}{k}} \mydel{S = V}\mydel{T = U} }
=
\frac{1}{\binom{n}{k}} \mydel{S = V}\mydel{T = U}
\]
as required.
\end{proof}

\label{sec:minor-orthogonal}
\section{The local theorem}\label{sec:local}

The goal of this section is to prove Theorem~\ref{thm:local}.
In fact we will prove something more general --- that Theorem~\ref{thm:local} 
holds when $R$ is {\em any} minor-orthogonal ensemble.
%
For the remainder of the section, we will assume the following setup:
we are given fixed integers $d,m$ and variables $x, y$ and the following 
matrices:
\begin{itemize}
\item $R \in \MM_{m, m}$, a uniformly distributed signed 
permutation matrix
\item $A_1, A_2 \in \MM_{d, m}$ and $B_1, B_2 \in \MM_{m, d}$ and $U \in 
\MM_{d, d}$ all of which are independent from $R$ and do not contain the 
variables $x$ and $y$ (but could contain other variables).
\end{itemize}

\begin{lemma} \label{lem:first}
Let $k, \ell \leq d$ be nonnegative integers, and let 
$S, T \in \binom{[d]}{k}$ and
$U, V \in \binom{[d]}{\ell}$.
Then
\[
\expect{R}{ [A_1 R B_1]_{S, T} [A_2 R^{T} B_2]_{U, V} } 
= 
\frac{1}{\binom{m}{k}} [A_1 B_2]_{S, V} [A_2 B_1]_{U, T} ~ \delta_{ \{ k = 
\ell \} }
\]
\end{lemma}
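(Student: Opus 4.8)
The strategy is to expand each minor of a triple product using the Cauchy–Binet formula \eqref{eq:mult}, pull out the matrices that are independent of $R$, and recognize the surviving expectation as exactly the minor-orthogonality identity. Concretely, applying \eqref{eq:mult} twice (associativity lets us bracket $A_1(RB_1)$, then $R B_1$ itself),
\[
[A_1 R B_1]_{S,T} = \sum_{|W|=k}\sum_{|X|=k} [A_1]_{S,W}\,[R]_{W,X}\,[B_1]_{X,T},
\]
and similarly
\[
[A_2 R^T B_2]_{U,V} = \sum_{|Y|=\ell}\sum_{|Z|=\ell} [A_2]_{U,Y}\,[R^T]_{Y,Z}\,[B_2]_{Z,V}.
\]
Here $W,X$ range over $\binom{[m]}{k}$ and $Y,Z$ over $\binom{[m]}{\ell}$, while $[A_1]_{S,W}$ etc.\ are themselves free of $x,y$ and independent of $R$. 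Multiplying the two expansions and taking $\expect{R}{\cdot}$, linearity of expectation moves the expectation onto the single factor $[R]_{W,X}[R^T]_{Y,Z}$.

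**The middle step.** By the definition of a minor-orthogonal ensemble (and Lemma~\ref{lem:sps}, which supplies that a uniform signed permutation matrix is one),
\[
\expect{R}{[R]_{W,X}[R^T]_{Y,Z}} = \frac{1}{\binom{m}{\max\{k,\ell\}}}\,\delta_{\{W=Z\}}\,\delta_{\{X=Y\}},
\]
which vanishes unless $k=\ell$; this is where the factor $\delta_{\{k=\ell\}}$ in the statement is born, and where the binomial coefficient $1/\binom{m}{k}$ comes from once $k=\ell$ is forced (note $\max\{m,m\}=m$). Substituting this back collapses the quadruple sum: the constraint $W=Z$ and $X=Y$ leaves a single sum over $W=Z\in\binom{[m]}{k}$ and $X=Y\in\binom{[m]}{k}$, giving
\[
\frac{1}{\binom{m}{k}}\sum_{|W|=k}\sum_{|X|=k}[A_1]_{S,W}\,[B_2]_{W,V}\,[A_2]_{U,X}\,[B_1]_{X,T}\,\delta_{\{k=\ell\}}.
\]
Then I would regroup: the sum over $W$ is $\sum_{|W|=k}[A_1]_{S,W}[B_2]_{W,V} = [A_1 B_2]_{S,V}$ by \eqref{eq:mult}, and the sum over $X$ is $\sum_{|X|=k}[A_2]_{U,X}[B_1]_{X,T} = [A_2 B_1]_{U,T}$, yielding exactly the claimed right-hand side.

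**Where the care is needed.** The computation is essentially bookkeeping, so the only real points requiring attention are: (i) keeping the index sets and their sizes straight — in particular that the inner summation indices live in $\binom{[m]}{k}$ while $S,T,U,V$ live in $\binom{[d]}{\cdot}$, so the minor-orthogonality identity is being applied on the $m\times m$ matrix $R$; (ii) making sure the pairing in $\delta_{\{W=Z\}}\delta_{\{X=Y\}}$ is matched to the correct product terms, which is what produces the ``crossed'' pattern $[A_1B_2]_{S,V}[A_2B_1]_{U,T}$ rather than the ``uncrossed'' one; and (iii) handling the degenerate cases $k=0$ or $\ell=0$ gracefully via the convention $[\cdot]_{\varnothing,\varnothing}=1$. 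I expect step (ii), correctly tracking which transpose sends $[R^T]_{Y,Z}$ to the $\delta$'s, to be the one place where a sign- or index-slip could creep in; everything else is a direct application of \eqref{eq:mult} and the definition of minor-orthogonality.
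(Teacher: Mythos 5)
Your proposal is correct and follows essentially the same route as the paper: expand both minors via (\ref{eq:mult}), apply the minor-orthogonality identity for the signed permutation matrix $R$ (Lemma~\ref{lem:sps}) to the factor $[R]_{W,X}[R^T]_{Y,Z}$, and collapse the sums with the deltas to obtain the crossed products $[A_1B_2]_{S,V}[A_2B_1]_{U,T}$. The only slip is writing the normalization as $1/\binom{m}{\max\{k,\ell\}}$ rather than $1/\binom{m}{k}$ as in the definition, which is harmless since the deltas force $k=\ell$.
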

\begin{proof}
By (\ref{eq:mult}) we have 
\begin{align*}
&\expect{R}{[A_1RB_1]_{S, T} [A_2 R^{T} B_2]_{U, V} } \\
&= \frac{1}{\binom{m}{k}} 
\sum_{W, X \in \binom{[m]}{k}}\sum_{Y, Z \in \binom{[m]}{\ell}} 
[A_1]_{S, W} [B_1]_{X, T} [A_2]_{U, Y} [B_2]_{Z, V} 
~ \delta_{ \{ W = Z \} } \delta_{ \{ X = Y \} }
\\&= \frac{1}{\binom{m}{k}} 
\sum_{W, X \in \binom{[m]}{k}} [A_1]_{S, W} 
[B_1]_{X, T} [A_2]_{U, X} [B_2]_{W, V} ~ \delta_{ \{ k = \ell \} }
\\&= \frac{1}{\binom{m}{k}} [A_1 B_2]_{S, V} [A_2 B_1]_{U, T}~ \delta_{ \{ k = 
\ell \} }
\end{align*}

\end{proof}

\begin{lemma} \label{lem:second}
Let $k \leq d$ be nonnegative integers, and let $S, T \in \binom{[d]}{k}$ and 
consider the polynomials
\[
p(x, y) 
= \expect{R}{[ x A_1 R B_1 + y A_2 R^{T} B_2]_{S, T} } 
= \sum_{i} p_i x^i y^{k-i}
\]
and
\[
q(x, y) = [ x A_1B_2 + y A_2 B_1 ]_{S, T} = \sum_{i} q_i x^i y^{k-i}.
\]
Then
\[
p_i =
\delta_{ \{ i = k/2 \} } \frac{(-1)^i} {\binom{m}{i}} q_i 
\]
\end{lemma}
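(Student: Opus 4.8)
The plan is to expand both minors using the formula \eqref{eq:add} for the minor of a sum, take the expectation over $R$ term by term, and then invoke Lemma~\ref{lem:first} to see which terms survive. Writing $P = x A_1 R B_1$ and $Q_{\mathrm{mat}} = y A_2 R^T B_2$ (two matrices in $\MM_{d,d}$), formula \eqref{eq:add} gives
\[
[P + Q_{\mathrm{mat}}]_{S, T} = \sum_{i} \sum_{W, Z \in \binom{[k]}{i}} (-1)^{\|W(S) + Z(T)\|_1} [P]_{W(S), Z(T)} [Q_{\mathrm{mat}}]_{\overline W(S), \overline Z(T)}.
\]
Now $[P]_{W(S), Z(T)} = x^i [A_1 R B_1]_{W(S), Z(T)}$ has size-$i$ index sets, while $[Q_{\mathrm{mat}}]_{\overline W(S), \overline Z(T)} = y^{k-i} [A_2 R^T B_2]_{\overline W(S), \overline Z(T)}$ has size-$(k-i)$ index sets. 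So the coefficient $p_i$ of $x^i y^{k-i}$ is a signed sum over pairs $W, Z \in \binom{[k]}{i}$ of $\expect{R}{[A_1 R B_1]_{W(S), Z(T)} [A_2 R^T B_2]_{\overline W(S), \overline Z(T)}}$.

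The first key observation is that Lemma~\ref{lem:first} forces this expectation to vanish unless the two minors have equal size, i.e. unless $i = k - i$, which gives the $\delta_{\{i = k/2\}}$ factor. When $i = k/2$, Lemma~\ref{lem:first} rewrites each surviving term as $\frac{1}{\binom{m}{i}} [A_1 B_2]_{W(S), \overline Z(T)} [A_2 B_1]_{\overline W(S), Z(T)}$. Pulling the constant $\frac{1}{\binom{m}{i}}$ out front, we are left with
\[
p_i = \delta_{\{i = k/2\}} \frac{1}{\binom{m}{i}} \sum_{W, Z \in \binom{[k]}{i}} (-1)^{\|W(S) + Z(T)\|_1} [A_1 B_2]_{W(S), \overline Z(T)} [A_2 B_1]_{\overline W(S), Z(T)}.
\]
The remaining task is to recognize that the sum on the right is, up to the sign $(-1)^i$, exactly the coefficient $q_i$ of $x^i y^{k-i}$ in $[x A_1 B_2 + y A_2 B_1]_{S, T}$, which by \eqref{eq:add} equals $\sum_{W, Z \in \binom{[k]}{i}} (-1)^{\|W(S) + Z(T)\|_1} [A_1 B_2]_{W(S), Z(T)} [A_2 B_1]_{\overline W(S), \overline Z(T)}$.

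The main obstacle — really the only nontrivial combinatorial point — is matching these two sums: in the expression for $p_i$ the first factor $[A_1 B_2]$ carries column set $\overline Z(T)$ and the second factor $[A_2 B_1]$ carries column set $Z(T)$, whereas in $q_i$ they carry $Z(T)$ and $\overline Z(T)$ respectively (the row sets $W(S), \overline W(S)$ already line up after relabeling). So I would reindex the $p_i$ sum by swapping the roles of $Z$ and $\overline Z$ (equivalently, replacing $Z$ by its complement $\overline Z \in \binom{[k]}{k-i} = \binom{[k]}{i}$, using $i = k/2$). Under this reindexing the factors match $q_i$ termwise, and the sign changes from $(-1)^{\|W(S) + \overline Z(T)\|_1}$ to $(-1)^{\|W(S) + Z(T)\|_1}$; the discrepancy is a global factor $(-1)^{\|Z(T)\| + \|\overline Z(T)\|_1} = (-1)^{\|T\|_1}$, which is independent of the summation variable. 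A short check — comparing $\|T\|_1 \bmod 2$ with the claimed $(-1)^i$ using $|T| = k = 2i$ — should reconcile this with the stated sign $(-1)^i$ (the parity bookkeeping here is exactly the kind of routine computation the identity $(-1)^{\|S+T\|_1} = (-1)^{\|S\|_1 + \|T\|_1}$ from the preliminaries is set up to handle). Once the signs agree, we conclude $p_i = \delta_{\{i = k/2\}} \frac{(-1)^i}{\binom{m}{i}} q_i$.
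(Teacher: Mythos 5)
Your overall strategy --- expand both minors with (\ref{eq:add}), use Lemma~\ref{lem:first} to kill every term in which the two minors have unequal sizes, pull out the factor $1/\binom{m}{i}$, and then reindex $Z \mapsto \overline{Z}$ to match the expansion of $q_i$ --- is exactly the paper's argument. The gap is in the final sign step, and as written it cannot be repaired in the way you suggest. Under your sign convention the swap $Z \mapsto \overline{Z}$ produces the factor $(-1)^{\|Z(T)\|_1 + \|\overline{Z}(T)\|_1} = (-1)^{\|T\|_1}$, and $\|T\|_1 \bmod 2$ is \emph{not} determined by $|T| = k$: for $k = 2$, $i = 1$, the sets $T = \{1,2\}$ and $T = \{1,3\}$ give $(-1)^{\|T\|_1} = -1$ and $+1$ respectively, while the lemma requires $(-1)^i = -1$ in both cases. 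So the promised ``short check'' comparing $\|T\|_1$ with $i$ does not exist; taken at face value your computation would yield a $T$-dependent sign that contradicts the statement being proved.

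The root cause is the sign convention in the minor-of-a-sum expansion: the signs must be computed from the \emph{positions} of the selected rows and columns inside $S$ and $T$, i.e.\ from the subsets $W, Z \subseteq [k]$ themselves, not from their images $W(S), Z(T)$. (Sanity check: in $[A+B]_{S,T}$ with $S = \{1,2\}$, $T = \{1,3\}$ the term $[A]_{S,T}$ must appear with coefficient $+1$, whereas the value-based sign would be $(-1)^{\|S\|_1 + \|T\|_1} = -1$.) Admittedly the displayed form of (\ref{eq:add}) invites your reading, but the paper's proof of this lemma works with position-based signs $(-1)^{\|W+X\|_1}$ for $W, X \subseteq [k]$. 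With that convention your reindexing argument goes through verbatim and produces the $W,Z$-independent factor $(-1)^{\|Z\|_1 + \|\overline{Z}\|_1} = (-1)^{1+2+\cdots+k} = (-1)^{\binom{k+1}{2}}$, which for $k = 2i$ equals $(-1)^{i(2i+1)} = (-1)^i$, as claimed. So the fix is purely in the sign bookkeeping; the rest of your write-up is sound and follows the paper's route.
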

\begin{proof}
We have by (\ref{eq:add})
\[
[ x A_1 R B_1 + y A_2 R^{T} B_2]_{S, T} 
= \sum_i \sum_{W, X \subseteq \binom{[k]}{i}} 
(-1)^{\| W + X \|_1} x^{|W|} y^{|\overline{W}|}
[A_1 R B_1]_{W(S), X(T)} [A_2 R^{T} B_2]_{\overline{W}(S), \overline{X}(T)}
\]
so by Lemma~\ref{lem:first} we will have
\[
\expect{R}{ [ x A_1 R B_1 + y A_2 R^{T} B_2]_{S, T} } = 0
\]
whenever $|W| \neq |\overline{W}|$.  
To complete the lemma, it therefore remains to show that, for $k = 2t$, we have
$p_{t} = \frac{(-1)^{t}} {\binom{m}{t}} q_{t}$.

Using (\ref{eq:add}) and Lemma~\ref{lem:first} again, we have
\begin{align}
p_t
&= \frac{1}{\binom{m}{t}} \sum_{W, X \subseteq \binom{[k]}{t}} 
(-1)^{\| W + X \|_1} 
[A_1 B_2]_{W(S), \overline{X}(T)} [A_2 B_1]_{\overline{W}(S), X(T)} \notag
\\&= \frac{1}{\binom{m}{t}} \sum_{W, X \subseteq \binom{[k]}{t}} 
(-1)^{\| W + \overline{X} \|_1} 
[A_1 B_2]_{W(S), X(T)} [A_2 B_1]_{\overline{W}(S), \overline{X}(T)} 
\label{eq:pt}
\end{align}
whereas
\begin{equation}\label{eq:qt}
q_t 
= \sum_{W, X \subseteq \binom{[k]}{t}} 
(-1)^{\| W + X \|_1} 
[A_1 B_2]_{W(S), X(T)} [A_2 B_1]_{\overline{W}(S), X(T)}. 
\end{equation}
Now using the fact that 
\[
\| W \|_1 + \| \overline{W} \|_1 = \sum_{i=1}^k i = \binom{k+1}{2}
\]
we get
$
(-1)^{\| W + X \|_1} = (-1)^{\| W + \overline{X} \|_1} (-1)^{\binom{k+1}{2}}
$
and so (\ref{eq:pt}) and (\ref{eq:qt}) combine to give
\[
p_t = \frac{(-1)^{\binom{k+1}{2}}}{\binom{[m]}{t}} q_t.
\]
Hence it remains to show $(-1)^{\binom{k+1}{2}} = (-1)^t$.
However, (since $k = 2t$) we have 
\[
(-1)^{\binom{k+1}{2}} = (-1)^{t(2t+1)} = (-1)^{2 t^2 + t} = (-1)^t
\]
finishing the proof.
\end{proof}

\begin{corollary}
For any sets $S, T$ with $|S| = |T|$, 
\[
\expect{R}{[ x A_1 R B_2 + y A_2 R^{T} B_1]_{S, T} } 
= \sum_i (-1)^{i} \frac{(m-i)!}{m! i!} (\partial_w)^{i}(\partial_z)^{i} 
[ w x  A_1B_1 + y z A_2 B_2 ]_{S, T} \bigg|_{w = z = 0}
\]
\end{corollary}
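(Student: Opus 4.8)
The plan is to obtain the corollary by reading it off Lemma~\ref{lem:second}, which already evaluates an expectation of exactly this shape; the only remaining work is to recognize the differential operator on the right-hand side as a compact rewriting of that lemma's conclusion. Throughout, set $k=|S|=|T|$ and let $q_j$ denote the coefficient of $x^jy^{k-j}$ in the homogeneous degree-$k$ polynomial $[xA_1B_1+yA_2B_2]_{S,T}$.

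First I would invoke Lemma~\ref{lem:second} with the roles of $B_1$ and $B_2$ interchanged. Under that relabelling the lemma's polynomial $p(x,y)$ is precisely $\expect{R}{[xA_1RB_2+yA_2R^TB_1]_{S,T}}$ and its $q(x,y)$ becomes $[xA_1B_1+yA_2B_2]_{S,T}=\sum_j q_j x^jy^{k-j}$. Since the lemma forces $p_i=\delta_{\{i=k/2\}}(-1)^i q_i/\binom{m}{i}$, this says
\[
\expect{R}{[xA_1RB_2+yA_2R^TB_1]_{S,T}}=\delta_{\{k\text{ even}\}}\,\frac{(-1)^{k/2}}{\binom{m}{k/2}}\,q_{k/2}\,x^{k/2}y^{k/2},
\]
i.e. only the balanced monomial survives (and nothing survives when $k$ is odd).

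Next I would expand the right-hand side. Replacing $x$ by $wx$ and $y$ by $yz$ in the entries of $xA_1B_1+yA_2B_2$ turns that matrix into $wxA_1B_1+yzA_2B_2$, and because the $(S,T)$-minor is a polynomial identity in those entries we get $[wxA_1B_1+yzA_2B_2]_{S,T}=\sum_{j=0}^k q_j(wx)^j(yz)^{k-j}$. Applying $(\partial_w)^i(\partial_z)^i$ and then setting $w=z=0$ annihilates every term except the one with $j=i$ and $k-j=i$, which contributes $(i!)^2 q_i x^iy^i$ and only when $k=2i$; hence the right-hand side equals $\sum_i(-1)^i\frac{(m-i)!\,i!}{m!}q_i x^iy^i\delta_{\{k=2i\}}$, and the identity $(m-i)!\,i!/m!=1/\binom{m}{i}$ makes this coincide with the display from the previous paragraph, finishing the proof.

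I do not anticipate a genuine obstacle: all the analytic content sits in Lemma~\ref{lem:second}, and the remaining steps are bookkeeping --- tracking the $B_1\leftrightarrow B_2$ swap, observing that the mixed partial derivative at the origin isolates exactly the balanced monomial together with the factor $(i!)^2$, and reconciling the two normalizations through $(m-i)!\,i!/m!=1/\binom{m}{i}$. Alternatively one could avoid citing Lemma~\ref{lem:second} and instead expand both sides directly via (\ref{eq:add}) and Lemma~\ref{lem:first}, but this essentially re-derives that lemma en route, so routing through it is the economical choice.
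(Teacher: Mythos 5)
Your argument is correct and matches the route the paper intends: it states this corollary without proof as an immediate consequence of Lemma~\ref{lem:second}, and your bookkeeping (the $B_1\leftrightarrow B_2$ relabelling, the observation that $(\partial_w)^i(\partial_z)^i\big|_{w=z=0}$ isolates the balanced monomial with a factor $(i!)^2$, and the identity $(m-i)!\,i!/m!=1/\binom{m}{i}$) is exactly the verification being left to the reader. No gaps.
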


\begin{corollary}\label{cor:Dat1}
For any sets $S, T$ with $|S| = |T|$, 
\[
\expect{R}{[(x A_1 + y A_2 R)(B_1 + R^{T} B_2)]_{S, T}}
= \sum_i (-1)^{i} \frac{(m-i)!}{m! i!} (\partial_w)^{i}(\partial_z)^{i} 
[ w x  A_1B_1 + y z A_2 B_2 ]_{S, T} \bigg|_{w = z = 1}
\]
\end{corollary}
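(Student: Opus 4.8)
The plan is to reduce Corollary~\ref{cor:Dat1} to the preceding corollary by a change of variables, so the bulk of the work is really in understanding why the two corollaries are essentially the same statement with the substitution point moved from $w=z=0$ to $w=z=1$.

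First I would expand the product inside the minor on the left-hand side. Writing
\[
(x A_1 + y A_2 R)(B_1 + R^T B_2)
= x A_1 B_1 + x A_1 R^T B_2 + y A_2 R B_1 + y A_2 R R^T B_2,
\]
I note that $R R^T = I$ since $R$ is a signed permutation matrix (this is the one place orthogonality of $R$ is used concretely rather than just minor-orthogonality), so the last term is simply $y A_2 B_2$. Thus the left-hand side is $\expect{R}{[x A_1 B_1 + y A_2 B_2 + x A_1 R^T B_2 + y A_2 R B_1]_{S,T}}$, where the first two summands are deterministic. The idea is now to split off those deterministic pieces using the additive minor formula (\ref{eq:add}) and to recognize the remaining expectation over the random part as an instance of the previous corollary (with the roles of $B_1$ and $B_2$ swapped, which is why that corollary is stated with $A_1 R B_2 + A_2 R^T B_1$).

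Concretely, I would introduce auxiliary variables $w, z$ and consider the more flexible expression $[w x A_1 B_1 + yz A_2 B_2 + x A_1 R^T B_2 + y A_2 R B_1]_{S,T}$. Applying (\ref{eq:add}) to peel off the $wx A_1 B_1$ and $yz A_2 B_2$ terms from the random terms, taking $\expect{R}{\cdot}$, and invoking Lemma~\ref{lem:first} to see which cross-terms survive, should produce exactly the differential operator $\sum_i (-1)^i \frac{(m-i)!}{m! i!} (\partial_w)^i (\partial_z)^i$ acting on $[wx A_1 B_1 + yz A_2 B_2]_{S,T}$ — but now the deterministic terms are already present in the minor, which corresponds to evaluating at $w = z = 1$ rather than $w = z = 0$. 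Put differently: expanding $[wx A_1 B_1 + yz A_2 B_2]_{S,T}$ by multilinearity of the minor in the two "directions" $A_1 B_1$ and $A_2 B_2$ and then acting by $(\partial_w)^i(\partial_z)^i\big|_{w=z=1}$ picks out precisely the terms where $i$ rows come from $A_1 B_1$, $i$ from $A_2 B_2$, which is the same bookkeeping as the additive expansion of the full minor. So the cleanest route is to write the left-hand side directly via (\ref{eq:add}) and Lemma~\ref{lem:first}, obtaining
\[
\expect{R}{[(xA_1 + yA_2 R)(B_1 + R^T B_2)]_{S,T}}
= \sum_i \frac{(-1)^i}{\binom{m}{i}} \, c_i,
\]
where $c_i$ is the sum over ways of choosing which $i$-subset of rows/columns is governed by the product $[A_1 B_2]\,[A_2 B_1]$ against the complementary part governed by $[A_1 B_1]$ and $[A_2 B_2]$, and then to check that $\frac{1}{\binom{m}{i}} = \frac{(m-i)!\,i!}{m!}$ matches $i! \cdot \frac{(m-i)!}{m! i!} \cdot i!$... — more carefully, that the combinatorial factor from differentiating $w^i z^i$ twice and the factor $\frac{(m-i)!}{m!i!}$ together reproduce $\frac{1}{\binom{m}{i}}$ up to the sign $(-1)^i$, exactly as in Lemma~\ref{lem:second}.

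The main obstacle I anticipate is purely bookkeeping: matching the sign conventions $(-1)^{\|W + X\|_1}$ from (\ref{eq:add}) and Lemma~\ref{lem:first} against the simple $(-1)^i$ in the differential-operator form, and verifying that the substitution point genuinely moves from $0$ to $1$ because the deterministic terms $x A_1 B_1$ and $y A_2 B_2$ are already "inside" the minor on the left-hand side (so all partitions of the index set, not just the balanced one, contribute — whereas in the previous corollary only the terms coming from $A_1 R^T B_2$ and $A_2 R B_1$ are present before differentiating, forcing evaluation at $0$). Once that dictionary is set up, the identity should follow by comparing coefficients term by term, with Lemma~\ref{lem:second}'s computation $(-1)^{\binom{k+1}{2}} = (-1)^t$ already having done the delicate sign work.
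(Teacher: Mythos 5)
Your overall architecture is the right one (and, as far as I can tell, the one the paper intends, since it states this corollary without proof): expand $(xA_1+yA_2R)(B_1+R^TB_2)=xA_1B_1+yA_2B_2+xA_1R^TB_2+yA_2RB_1$ using $RR^T=I$, separate the deterministic part from the random part with (\ref{eq:add}), and feed the random part into the preceding corollary (noting that $R^T$ has the same distribution as $R$). However, the step you defer as ``bookkeeping'' --- why this equals the differential operator evaluated at $w=z=1$ --- is the actual content of the corollary, and the two concrete things you assert about it are off. First, once Lemma~\ref{lem:first}/Lemma~\ref{lem:second} is applied to the cross terms $xA_1R^TB_2+yA_2RB_1$, the surviving products are $[A_1B_1]\,[A_2B_2]$ minors, not $[A_1B_2]\,[A_2B_1]$ (the lemma swaps the $B$'s; that is precisely why the answer is expressible in the pencil $wxA_1B_1+yzA_2B_2$ at all). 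Second, $(\partial_w)^{i}(\partial_z)^{i}\big|_{w=z=1}$ does not pick out ``precisely'' the terms with $i$ rows from each of $A_1B_1$ and $A_2B_2$ --- that is what evaluation at $w=z=0$ does; at $w=z=1$ every bidegree $(a,k-a)$ with $a\ge i$ and $k-a\ge i$ contributes, with weight $\frac{a!}{(a-i)!}\frac{(k-a)!}{(k-a-i)!}$. Relatedly, your proposed endgame (``comparing coefficients term by term'' after fully expanding both sides) does not literally typecheck: the fully expanded left side consists of products of four minors (two from expanding the deterministic minor, two produced by Lemma~\ref{lem:first}), while the right side consists of products of two, and minors over disjoint index sets do not merge.

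The missing ingredient that makes ``deterministic terms inside the minor $\Leftrightarrow$ evaluate at $1$'' precise --- and which avoids expanding the deterministic minor at all --- is the shift identity obtained by applying (\ref{eq:add}) to $[(1+w)xA_1B_1+y(1+z)A_2B_2]_{S,T}$ and then differentiating at $w=z=0$:
\begin{align*}
(\partial_w)^{j}(\partial_z)^{j}\,[wxA_1B_1+yzA_2B_2]_{S,T}\Big|_{w=z=1}
&=\sum_{i}\sum_{U,V\in\binom{[k]}{i}}(-1)^{\|U(S)+V(T)\|_1}\,[xA_1B_1+yA_2B_2]_{U(S),V(T)}\\
&\qquad\times(\partial_w)^{j}(\partial_z)^{j}\,[wxA_1B_1+yzA_2B_2]_{\overline{U}(S),\overline{V}(T)}\Big|_{w=z=0}.
\end{align*}
Multiplying by $(-1)^{j}\frac{(m-j)!}{m!\,j!}$, summing over $j$, and applying the preceding corollary to each inner factor turns the right-hand side of Corollary~\ref{cor:Dat1} into
\[
\sum_{i}\sum_{U,V\in\binom{[k]}{i}}(-1)^{\|U(S)+V(T)\|_1}\,[xA_1B_1+yA_2B_2]_{U(S),V(T)}\;\expect{R}{[xA_1R^TB_2+yA_2RB_1]_{\overline{U}(S),\overline{V}(T)}},
\]
which by (\ref{eq:add}) read in reverse is exactly $\expect{R}{[(xA_1+yA_2R)(B_1+R^TB_2)]_{S,T}}$. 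With this identity inserted your argument is complete; without it, the passage from the $w=z=0$ statement to the $w=z=1$ statement is asserted rather than proved. (Your use of $RR^T=I$ is fine in the stated setting of signed permutation matrices, though note it is a pointwise fact about that ensemble, not a consequence of minor-orthogonality.)
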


We are now in a position to prove Theorem~\ref{thm:local}:

\begin{proof}[Proof of Theorem~\ref{thm:local}]
By (\ref{eq:add}), it suffices to show
\begin{equation}\label{eq:toshow}
\expect{R}{[(x A_1 + y A_2 R)(B_1 + R^{T} B_2)]_{S, T}} = L_m^{x, y} \left \{ 
[x A_1 B_1 + y A_2 B_2]_{S, T} \right \}
\end{equation}
for all $|S| = |T| = k$.  
For $k > m$, both sides of (\ref{eq:toshow}) are $0$, so we restrict to the 
case $k \leq m$.
Using Corollary~\ref{cor:Dat1}, (\ref{eq:toshow}) is equivalent to 
showing 
\[
L_m^{x, y} \left \{ 
[x A_1 B_1 + y A_2 B_2]_{S, T} \right \} =
\sum_i (-1)^{i} \frac{(m-i)!}{m! i!} (\partial_w)^{i}(\partial_z)^{i} 
[ w x  A_1B_2 + y z A_2 B_1 ]_{S, T} \bigg|_{w = z = 1}.
\]
Using (\ref{eq:add}) again, we have
\[
L_m^{x, y} \left \{ 
[x A_1 B_1 + y A_2 B_2]_{S, T} \right \}
=
\sum_{i} \sum_{W, X \subseteq \binom{[k]}{i}} 
(-1)^{\| W + X \|_1} 
L_m^{x, y}  \{ x^i y^{k-i} \}
[A_1 B_1]_{W(S), \overline{X}(T)} [A_2 B_2]_{\overline{W}(S), X(T)} 
\]
so the coefficient of $x^i y^{k-i}$ is 
\[
\sum_{W, X \subseteq \binom{[k]}{i}} 
(-1)^{\| W + X \|_1} 
\frac{(m-i)!(m-k+i)!}{m!(m-k)!}
[A_1 B_1]_{W(S), \overline{X}(T)} [A_2 B_2]_{\overline{W}(S), X(T)}.
\]
On the other hand, we have
\begin{align*}
&\sum_j (-1)^{j} \frac{(m-j)!}{m! j!} (\partial_w)^{i}(\partial_z)^{i} 
[ w x  A_1B_1 + y z A_2 B_2 ]_{S, T} \bigg|_{w = z = 1}
\\&=
\sum_{i, j} (-1)^{i} \frac{(m-j)!}{m! j!} (\partial_w)^{j}(\partial_z)^{j} 
\sum_{W, X \subseteq \binom{[k]}{j}} 
(-1)^{\| W + X \|_1} 
 (wx)^i (yz)^{k-i} 
[A_1 B_1]_{W(S), \overline{X}(T)} [A_2 B_2]_{\overline{W}(S), X(T)}
\\&=
\sum_{i, j} (-1)^{j} \frac{(m-j)!}{m! j!}
\sum_{W, X \subseteq \binom{[k]}{i}} 
(-1)^{\| W + X \|_1} 
\frac{i!}{(i-j)!}\frac{(k-i)!}{(k-i-j)!} x^i y^{k-i} 
[A_1 B_1]_{W(S), \overline{X}(T)} [A_2 B_2]_{\overline{W}(S), X(T)}
\end{align*}
so the coefficient of $x^i y^{k-i}$ is 
\[
\sum_{W, X \subseteq \binom{[k]}{i}} 
(-1)^{\| W + X \|_1} 
[A_1 B_1]_{W(S), \overline{X}(T)} [A_2 B_2]_{\overline{W}(S), X(T)}
\sum_j (-1)^{j} \frac{(m-j)!}{m! j!}
\frac{i!}{(i-j)!}\frac{(k-i)!}{(k-i-j)!}
\]
So it suffices to show 
\[
\sum_j (-1)^{j} \frac{(m-j)!}{m! j!}
\frac{i!}{(i-j)!}\frac{(k-i)!}{(k-i-j)!}
=
\frac{(m-i)!(m-k+i)!}{m!(m-k)!}.
\]
or, after substituting $i \leftarrow a$ and $b \leftarrow k - i$, to show
\[
\sum_j (-1)^{j} \frac{(m-j)!}{m! j!}
\frac{a!}{(a-j)!}\frac{b!}{(b-j)!}
=
\frac{(m-a)!(m-b)!}{m!(m-a-b)!}.
\]
whenever $a + b \leq m$ (our assumption).
However this follows directly from standard theorems on generalized binomials:
\begin{align*}
\sum_i (-1)^{i} \frac{(m-i)!}{m! i!}
\frac{a!}{(a-i)!}\frac{b!}{(b-i)!}
&= \frac{b!(m-b)!}{m!} \sum_i (-1)^i \binom{a}{i} \binom{m-i}{b-i} 
\\&= \frac{b!(m-b)!}{m!} (-1)^b \sum_i \binom{a}{i} \binom{-(m-b+1)}{b-i} 
\\&= \frac{b!(m-b)!}{m!} (-1)^b \binom{-(m-b-a+1)}{b} 
\\&= \frac{b!(m-b)!}{m!} \binom{m-a}{b} 
\\&= \frac{(m-a)!(m-b)!}{(m-a-b)! m!} 
\end{align*}
as required.
\end{proof}

\section{The global theorem}\label{sec:global}

The goal of this section is to prove Theorem~\ref{thm:global}.
Using the multilinearity of the mixed discriminant, one can show that Theorem~\ref{thm:global} is equivalent to the following theorem (note that $n! = D(I, \dots, I)$ and so this has the familiar form of zonal spherical polynomials --- see \cite{macd}).

\begin{theorem} \label{lem:separate}
Let $A_1, \dots, A_n, B_1, \dots, B_n, Q \in \MM_{n,n}$ where $Q$ is a 
uniformly distributed signed permutation matrix.
Then
\begin{equation}\label{eq}
\expect{Q}{D(A_1 Q B_1 Q^T, A_2 Q B_2 Q^T, \dots, A_n Q B_n Q^T)} = \frac{1}{n!} D(A_1, A_2, \dots, A_n)D(B_1, B_2, \dots, B_n).
\end{equation}
\end{theorem}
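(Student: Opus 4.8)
The plan is to expand the mixed discriminant on the left-hand side into a sum over matrix entries and then use minor-orthogonality to collapse the average. Concretely, by multilinearity of $D$ (Lemma~\ref{lem:md}, parts 1 and 2) I would first reduce to the rank-one case: write each $A_i = \sum_{r} u_{i,r} e_{?}^T$ is awkward, so instead I would use the standard expansion of the mixed discriminant in terms of columns. Recall that $D(X_1,\dots,X_n) = \sum_{\pi \in S_n} \operatorname{sgn}(\pi) \prod_i (X_i)_{\pi(i), i}$ up to the combinatorial normalization; more usefully, $n!\,D(X_1,\dots,X_n)$ equals the sum over all ways of choosing one column from each $X_i$ (respecting a permutation) of the corresponding determinant. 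The cleanest route is: expand $\mydet{\sum_i t_i X_i}$ via the Leibniz formula, extract the coefficient of $t_1 t_2 \cdots t_n$, and observe that $D(X_1,\dots,X_n) = \frac{1}{n!}\sum_{\sigma,\tau \in S_n}\operatorname{sgn}(\sigma\tau) \prod_i (X_{i})_{\sigma(i),\tau(i)}$ — i.e. it is a weighted sum of products of single entries, one entry from each matrix.

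Applying this to $X_i = A_i Q B_i Q^T$, the entry $(X_i)_{a,b}$ becomes $\sum_{c,d} (A_i)_{a,c} Q_{c,?}\cdots$; expanding $Q B_i Q^T$ entrywise gives $(X_i)_{a,b} = \sum_{c,d}(A_i)_{a,c}(B_i)_{?,?} Q_{c,d} Q_{b,d'}$ — wait, one must be careful: $(Q B_i Q^T)_{c,b} = \sum_{d,e} Q_{c,d}(B_i)_{d,e}(Q^T)_{e,b} = \sum_{d,e} Q_{c,d} Q_{b,e} (B_i)_{d,e}$. So each factor carries exactly two $Q$-entries, one from $Q$ and one from $Q^T$, and the whole product over $i=1,\dots,n$ carries a product of $n$ entries of $Q$ times $n$ entries of $Q^T$. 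Taking $\expect{Q}{\cdot}$ then requires knowing $\expect{Q}{\prod_{i} Q_{c_i,d_i} \prod_i Q_{b_i,e_i}}$. The key structural fact is that this expectation is supported on configurations where the row-and-column index sets match up in a way that makes the product of $Q$'s factor into a product of $1\times 1$ minors, and more generally one should recognize the $n$-fold products as expansions of $n\times n$ minors $[Q]_{S,T}$. So the real move is to re-sum: the signed sum over $\sigma,\tau$ of $\prod_i (A_i Q B_i Q^T)_{\sigma(i),\tau(i)}$, after expanding each factor into $\sum Q_{\cdot,\cdot}(B_i)_{\cdot,\cdot}Q_{\cdot,\cdot}(A_i)_{\cdot,\cdot}$, reorganizes into $\sum_{S,T}[Q]_{?,?}[Q^T]_{?,?}$ times mixed-discriminant-like sums in the $A_i$'s and $B_i$'s separately. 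Then Lemma~\ref{lem:sps} (signed permutations are minor-orthogonal) gives $\expect{Q}{[Q]_{S,T}[Q^T]_{U,V}} = \frac{1}{\binom{n}{k}}\delta_{\{S=V\}}\delta_{\{T=U\}}$, and with $k=n$ all these minors are just $\pm\det Q$, so $\binom{n}{n}=1$ and $S=V=T=U=[n]$ forced, leaving $\expect{Q}{(\det Q)^2}=1$. The surviving terms should assemble into exactly $\frac{1}{n!}D(A_1,\dots,A_n)D(B_1,\dots,B_n)$.

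The step I expect to be the main obstacle is the bookkeeping that shows the entrywise expansion of $\prod_i(A_i Q B_i Q^T)_{\sigma(i),\tau(i)}$, summed against $\operatorname{sgn}(\sigma\tau)/n!$, genuinely reorganizes as a sum over pairs of $n$-subsets (here forced to be $[n]$) of $[Q]\cdot[Q^T]\cdot(\text{Cauchy--Binet-type sum in }A)\cdot(\text{same in }B)$ — essentially a Cauchy--Binet / compound-matrix argument carried out simultaneously in the row and column indices and across all $n$ matrices. One has to track the signs $\operatorname{sgn}$ coming from the Leibniz expansions of the minors and check they match $\operatorname{sgn}(\sigma\tau)$; Lemma~\ref{lem:md} part 4 ($D(u_1v_1^T,\dots) = \det[u_1\cdots u_n]\det[v_1\cdots v_n]$) suggests the right way to organize this is to first reduce to rank-one matrices $A_i = a_i \alpha_i^T$, $B_i = b_i\beta_i^T$ by multilinearity, compute both sides explicitly in that case, and then extend by multilinearity. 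In the rank-one case, $A_i Q B_i Q^T = a_i(\alpha_i^T Q b_i)(\beta_i^T Q^T)$ — wait, $\alpha_i^T Q b_i$ is a scalar, so $A_i Q B_i Q^T = (\alpha_i^T Q b_i)\, a_i \beta_i^T Q^T = (\alpha_i^T Q b_i)\,a_i (Q\beta_i)^T$, a rank-one matrix $a_i w_i^T$ with $w_i = (\alpha_i^T Q b_i) Q\beta_i$. Then by Lemma~\ref{lem:md} part 4, $D(A_1 Q B_1 Q^T,\dots) = \det[a_1\cdots a_n]\det[w_1\cdots w_n] = \det[a_1\cdots a_n]\cdot\Big(\prod_i \alpha_i^T Q b_i\Big)\cdot\det[Q\beta_1\cdots Q\beta_n]$, and $\det[Q\beta_1\cdots Q\beta_n] = \det Q\cdot\det[\beta_1\cdots\beta_n]$. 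So the left side is $\det Q \cdot \det[a_i]\det[\beta_i]\prod_i(\alpha_i^T Q b_i)$, and I would write $\prod_i \alpha_i^T Q b_i = \prod_i \sum_{c,d}(\alpha_i)_c Q_{c,d}(b_i)_d$; expanding, this is $\sum$ over functions $i\mapsto(c_i,d_i)$ of $\prod(\alpha_i)_{c_i}(b_i)_{d_i}\prod Q_{c_i,d_i}$. Multiplying by $\det Q = [Q]_{[n],[n]} = \sum_{\rho}\operatorname{sgn}(\rho)\prod_i Q_{i,\rho(i)}$ and taking $\expect{Q}{\cdot}$, minor-orthogonality of $Q$ (in the form $\expect{Q}{Q_{c,d}Q_{c',d'}} = \frac1n\delta_{cc'}\delta_{dd'}$, the $k=\ell=1$ case of Lemma~\ref{lem:sps}, iterated) should force $\{c_i\} = \{d_i\} = [n]$ as sets and pair them correctly, yielding $\frac{1}{n!}\det[\alpha_i]\det[b_i]$ after summing with the right signs — here the $\operatorname{sgn}$ from $\det Q$ cancels against permutation signs, giving the factor $\det[\alpha_1\cdots\alpha_n]\det[b_1\cdots b_n]$. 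Collecting: left side $= \frac{1}{n!}\det[a_i]\det[\alpha_i]\cdot\det[\beta_i]\det[b_i] = \frac{1}{n!}D(a_1\alpha_1^T,\dots,a_n\alpha_n^T)D(b_1\beta_1^T,\dots,b_n\beta_n^T) = \frac{1}{n!}D(A_1,\dots,A_n)D(B_1,\dots,B_n)$, as claimed; the general case follows by multilinearity in each $A_i$ and each $B_i$ separately. The delicate point throughout is verifying that the sign arithmetic in the iterated entrywise expectation of a product of $2n$ entries of $Q$ produces exactly the two determinants with the correct signs rather than some permuted or sign-twisted version — this is where Lemma~\ref{lem:sps}'s clean statement (as opposed to a raw computation) is doing the real work.
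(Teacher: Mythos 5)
Your reduction is essentially the paper's: reduce by multilinearity to rank-one arguments, factor $A_i Q B_i Q^T = (\alpha_i^T Q b_i)\, a_i (Q\beta_i)^T$, apply part 4 of Lemma~\ref{lem:md} and pull out $\mydet{Q}$, so that everything boils down to the scalar identity
\begin{equation*}
\expect{Q}{\mydet{Q}\,\textstyle\prod_{i=1}^n \alpha_i^T Q\, b_i}
= \frac{1}{n!}\,\mydet{\alpha_1\cdots\alpha_n}\,\mydet{b_1\cdots b_n}.
\end{equation*}
Up to that point your argument is correct and matches the paper (which takes $\alpha_i, b_i$ to be standard basis vectors, an equivalent normalization). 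The genuine gap is the last step, which is the crux of the theorem: you propose to get this identity from Lemma~\ref{lem:sps}, ``the $k=\ell=1$ case, iterated.'' Minor-orthogonality only controls the expectation of a product of \emph{two} minors, one of $Q$ and one of $Q^T$; the quantity above is a mixed moment of $2n$ entries of $Q$ ($n$ from $\det Q$'s expansion and $n$ from the scalars $\alpha_i^T Q b_i$), and such moments are not determined by, nor factorable into, pairwise entry moments --- the entries of a signed permutation matrix are strongly dependent (e.g.\ $\expect{Q}{Q_{1,1}^2 Q_{2,2}^2} = \tfrac{1}{n(n-1)} \neq \tfrac{1}{n^2}$), so ``iterating'' $\expect{Q}{Q_{c,d}Q_{c',d'}}=\tfrac1n\delta_{\{c=c'\}}\delta_{\{d=d'\}}$ is not a valid move. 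Indeed the paper flags exactly this: its proof of the global theorem abandons minor-orthogonality at this point (see the footnote ``this is where we lose the generality of minor-orthogonal ensembles''), and whether Theorem~\ref{thm:global} holds for general minor-orthogonal ensembles is listed as an open problem. So the lemma you expect to be ``doing the real work'' cannot do it.

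What is needed instead is a direct computation using the signed-permutation structure, which is how the paper closes the argument: write $Q = P_\pi E_\chi$, note $\alpha_i^T Q b_i$ contributes a factor $\chi$ indexed by the $b$-side, and average over the signs $\chi$ first; this forces the $b$-side indices to be distinct (otherwise the expectation is $0$, matching the vanishing of $\mydet{b_1\cdots b_n}$), and symmetrically for the $\alpha$-side. One is then left with the purely combinatorial identity $\sum_{\pi}\mydet{P_\pi}\prod_i \delta_{\{\sigma(i)=\pi(\tau(i))\}} = \mydet{P_\sigma}\mydet{P_\tau}$, which holds because only $\pi = \sigma\circ\tau^{-1}$ survives, and the $1/n!$ comes from the uniform average over $\pi$. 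Your sketch gestures at ``summing with the right signs'' but never carries out this sign/support bookkeeping, and the tool you cite in its place is insufficient in principle; supplying this computation is what completes the proof.
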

\begin{proof}
We start by noticing that, by the multilinearity of the mixed discriminant, it suffices to prove the theorem when the $A_i, B_i$ are the basis elements $\{ e_i e_j^T \}_{i, j=1}^n$.
So let $A_i = e_{w_i} e_{x_i}^T$ and $B_i = e_{y_i} e_{z_i}^T$ where $w_i, x_i, y_i, z_i \in [n]$.
Then for each $Q$ we have
\[
D(A_1 Q B_1 Q^T, \dots, A_n Q B_n Q^T)
= \mydet{Q} D(e_{w_1} e_{x_1}^T Q e_{y_1} e_{z_1}^T, \dots, e_{w_n} e_{x_n}^T Q e_{y_n} e_{z_n}^T)
\]
where each $e_{x_i}^T Q e_{y_i}$ is a scalar (and so can be factored out).
Hence 
\begin{align*}
D(A_1 Q B_1 Q^T, \dots, A_n Q B_n Q^T)
&= \mydet{Q} \left( \prod_i e_{x_i}^T Q e_{y_i} \right) D(e_{w_1} e_{z_1}^T, \dots, e_{w_n} e_{z_n}^T)
\\&= \mydet{Q} \left( \prod_i e_{x_i}^T Q e_{y_i} \right) \mydet{ e_{w_1}~\dots~e_{w_n}}\mydet{ e_{z_1}~\dots~e_{z_n}}
\end{align*}
On the other hand, 
\[
D(A_1, A_2, \dots, A_n) = \mydet{ e_{w_1}~\dots~e_{w_n}} \mydet{ e_{x_1}~\dots~e_{x_n}}
\]
and similarly for $B$, so we find that (\ref{eq}) is equivalent to showing 
\begin{equation}\label{eq2}
\expect{Q}{\mydet{Q} \prod_i e_{x_i}^T Q e_{y_i}} = \frac{1}{n!} \mydet{ e_{x_1}~\dots~e_{x_n}}\mydet{ e_{y_1}~\dots~e_{y_n}}.
\end{equation}
We now decompose\footnote{This is where we lose the generality of 
minor-orthogonal ensembles.} each $Q$ as $Q = P_\pi E_\chi$ where $P_\pi$ is a 
permutation matrix and $E_\chi$ is a diagonal matrix with diagonal entries 
$\chi_1, \dots, \chi_n \in \{ \pm 1 \}$.
Hence $\mydet{Q} = \left(\prod_i \chi_i \right) \mydet{P_\pi}$ and
\[
e_{x_i}^T Q e_{y_i} 
= e_{x_i}^T P_\pi E_\chi e_{y_i}
= \chi_{y_i} e_{x_i}^T e_{\pi(y_i)}
= \chi_{y_i} \delta_{ \{ x_i = \pi(y_i) \} }
\]
and so 
\[
\expect{Q}{\mydet{Q} \left( \prod_i e_{x_i}^T Q e_{y_i} \right)}
= \frac{1}{n!} \sum_{\pi} \mydet{P_\pi} \left(\prod_i \delta_{ \{ x_i = \pi(y_i) \}} \right) \expect{\chi_1, \dots, \chi_n}{ \left( \prod_i \chi_i \chi_{y_i} \right)}.
\]
Now it is easy to see that  
\[
\expect{\chi_1, \dots, \chi_n}{ \left( \prod_i \chi_i \chi_{y_i} \right)} = 1
\]
whenever the $y_i$ are distinct (that is, form a permutation of $[n]$) and $0$ otherwise.
For distinct $y_i$, it should then be clear that 
\[
\left(\prod_i \delta_{ \{ x_i = \pi(y_i) \}} \right) = 0
\]
unless the $x_i$ are also distinct.
Of course, this also holds for $\mydet{ e_{x_1}~\dots~e_{x_n}}$ and $\mydet{ e_{y_1}~\dots~e_{y_n}}$ and so (\ref{eq2}) is true whenever the $x_i$ or $y_i$ are not distinct (as both sides are $0$).  

Thus it remains to consider the case when $x_i = \sigma(i)$ for some $\sigma$ and $y_i = \tau(i)$ for some $\tau$. 
That is, we must show 
\begin{equation}\label{eq3}
 \sum_{\pi} \mydet{P_\pi} \left(\prod_i \delta_{ \{ \sigma(i) = \pi(\tau(i)) \}} \right) = \mydet{P_\sigma} \mydet{P_\tau}
\end{equation}
for all permutations $\tau$ and $\sigma$.
But now it is easy to see that 
$\left(\prod_i \delta_{ \{ \sigma(i) = \pi(\tau(i)) \}} \right) = 0$
for all permutations $\pi$ except for one: $\pi = \sigma \circ \tau^{-1}$.
Hence 
\[
 \sum_{\pi} \mydet{P_\pi} \left(\prod_i \delta_{ \{ \sigma(i) = \pi(\tau(i)) \}} \right) 
= \mydet{P_{\sigma \circ \tau^{-1}}} 
= \mydet{P_{\sigma}}\mydet{P_{\tau^{-1}}}
\]
where
\[
\mydet{P_{\tau^{-1}}} = \mydet{P_\tau^{-1}} = \mydet{P_\tau^T} = \mydet{P_\tau},
\]
proving (\ref{eq3}), which in turn proves the remaining (nonzero) cases of (\ref{eq2}), and therefore the theorem.

\end{proof}

\section{Applications}\label{sec:apps}

%
%

In this section, we list some direct applications of the main theorems.

\subsection{Permanents of Low Rank Matrices}

Our first application is an algorithm for computing permanents of low rank 
matrices that was originally discovered by Barvinok \cite{bar} using similar 
tools.
Barvinok's algorithm takes advantage of a well known connection between 
permanents and mixed discriminants: the permanent of a matrix $M \in \MM_{n, 
n}$ is the mixed discriminant $D(A_1, \dots, A_n)$ where each $A_i$ is a 
diagonal matrix with diagonal matching the $i$th column of $M$.

When working with diagonal matrices, the signed permutation matrices behave in a particularly nice way: the $\pm 1$ entries in $Q$ and $Q^{T}$ cancel, so one can reduce such formulas to an average over (unsigned) permutation matrices.
\begin{corollary}
Let $\{ A_i \}_{i=1}^k$ and $\{ B_i \}_{i=1}^k$ be $n \times n$ diagonal matrices and consider the polynomials
\[
p(x_1, \dots, x_k) = \mydet{ \sum_i x_i A_i }
\AND
q(x_1, \dots, x_k) = \mydet{ \sum_i x_i B_i }
\]
Then 
\[
\frac{1}{n!} \sum_{P \in \mathcal{P}_n} \mydet{ \sum_i x_i A_i P B_i P^T } = [ p \star q](x_1, \dots, x_k)
\]
\end{corollary}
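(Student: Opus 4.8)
The plan is to deduce this corollary directly from Theorem~\ref{thm:global} by exploiting the fact that, for diagonal matrices, the signs in a signed permutation matrix cancel. Concretely, I would write a uniformly random signed permutation matrix as $Q = P_\pi E_\chi$ as in the proof of Theorem~\ref{lem:separate} (or equivalently $Q = E_\chi P_\pi$), apply Theorem~\ref{thm:global} with $A_i$ and $B_i$ the given diagonal matrices, and then simplify the left-hand side.

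The key computation is the following: if $D$ is a diagonal matrix, $E_\chi$ a $\pm 1$ diagonal matrix, and $P_\pi$ a permutation matrix, then $Q D Q^T = P_\pi E_\chi D E_\chi P_\pi^T = P_\pi D P_\pi^T$, since $E_\chi D E_\chi = D$ (both $E_\chi$ and $D$ are diagonal, and $\chi_i^2 = 1$). Hence $\sum_i x_i A_i Q B_i Q^T = \sum_i x_i A_i P_\pi B_i P_\pi^T$ depends only on the underlying permutation $\pi$, not on the signs $\chi$. Therefore
\[
\expect{Q}{\mydet{\sum_i x_i A_i Q B_i Q^T}}
= \expect{\pi}{\mydet{\sum_i x_i A_i P_\pi B_i P_\pi^T}}
= \frac{1}{n!}\sum_{P \in \mathcal{P}_n} \mydet{\sum_i x_i A_i P B_i P^T},
\]
where the last equality is just the definition of the expectation over a uniformly random permutation matrix. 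Combining this with the conclusion of Theorem~\ref{thm:global}, namely that the left-hand side equals $[p \star q](x_1, \dots, x_k)$, gives the claimed identity.

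There is essentially no obstacle here; the only point requiring a small amount of care is the bookkeeping of how $Q = P_\pi E_\chi$ versus $Q = E_\chi P_\pi$ interacts with $Q B_i Q^T$, but either decomposition works because conjugating a diagonal matrix by a $\pm 1$ diagonal matrix is the identity operation. One should also note in passing that $A_i P B_i P^T$ is again diagonal (a permutation conjugate of a diagonal matrix is diagonal, and a product of diagonal matrices is diagonal), which is consistent with the corollary but not actually needed for the proof. So the "hard part" is merely to observe the sign cancellation and invoke Theorem~\ref{thm:global}.
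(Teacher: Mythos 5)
Your proposal is correct and matches the paper's reasoning: the paper justifies this corollary exactly by the observation that for diagonal $A_i, B_i$ the $\pm 1$ entries of a signed permutation matrix cancel under conjugation, reducing the expectation in Theorem~\ref{thm:global} to a uniform average over the $n!$ unsigned permutation matrices. Your bookkeeping with $Q = P_\pi E_\chi$ and $E_\chi B_i E_\chi = B_i$ fills in the same sign-cancellation step the paper only sketches, so there is nothing to add.
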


Given a vector $v$, let $\diag(v)$ denote the diagonal matrix whose diagonal 
entries are $v$.
Note that if $A_i = \diag(a_i)$ and $B_i = \diag(b_i)$ for each $i$, then 
\[
\sum_{P \in \mathcal{P}_n} \mydet{ \sum_i x_i A_i P B_i P^T } = \perm\left[ \sum_i x_i a_i b_i^T \right].
\]

\vspace{0.25cm}
\noindent \textbf{Algorithm to find $\perm\left[ \sum_{i=1}^k a_i b_i^T \right]$} for $a_i, b_i \in \R^n$.
\begin{enumerate}
\item Form $A_i = \diag(a_i)$ and $B_i = \diag(b_i)$.
\item Compute $p(x_1, \dots, x_k) = \mydet{ \sum_{i=1}^k x_i A_i }$ and $q(x_1, \dots, x_k) = \mydet{ \sum_{i=1}^k x_i B_i }$
\item Compute $[p \star q](x_1, \dots, x_k)$
\item $\perm\left[ \sum_i a_i b_i^T \right] = n! [p \star q](1, \dots, 1)$.
\end{enumerate}

The complexity of this algorithm depends primarily on the number of terms in 
the polynomials $p$ and $q$, which (in general) will be the 
number of nonnegative integer solutions to the equation $\sum_{i=1}^k t_i = n$, 
which is known to be $\binom{n+k-1}{k-1}$ (see 
\cite{stanley}).

\subsection{Other convolutions}\label{sec:convolutions}

In this section, we show that the standard univariate convolutions defined in  
\cite{ffp} can each be derived from the main theorems.

\subsubsection{Additive convolution of eigenvalues}\label{sec:additive}
Given matrices $A, B \in \MM_{d, d}$ and polynomials 
\[
p(x) = \mydet{xI - A}
\AND
q(x) = \mydet{xI - B}
\]
the {\em additive convolution} of $p$ and $q$ can be written as 
\[
[p \boxplus q](x) = \expect{Q}{\mydet{xI - A - Q B Q^T}}
\]
where $Q$ can be chosen to be any minor-orthogonal ensemble (see \cite{dui}).
This can be achieved by setting 
\[
\hat{p}(x, y, z) = \mydet{x I + y A + z I}
\AND
\hat{q}(x, y, z) = \mydet{x I + y I + z B}
\]
and applying Theorem~\ref{thm:global} to get 
\[
\hat{p} \star \hat{q} = \expect{R}{\mydet{x I + y A + z R B R^T}}
\]
The formula for $[p \boxplus q]$ follows by setting $y = z = -1$.

%

\subsubsection{Multiplicative convolution of eigenvalues} 
\label{sec:multiplicative}
Given matrices $A, B \in \MM_{d, d}$ and polynomials 
\[
p(x) = \mydet{xI - A}
\AND
q(x) = \mydet{xI - B}
\]
the {\em multiplicative convolution} of $p$ and $q$ can be written as 
\[
[p \boxtimes q](x) = \expect{Q}{\mydet{xI - A Q B Q^T}}
\]
where $Q$ can be chosen to be any minor-orthogonal ensemble (see \cite{dui}).
Given matrices $A$ and $B$, this can be achieved by setting 
\[
\hat{p}(x, y) = \mydet{x I + y A}
\AND
\hat{q}(x, y) = \mydet{x I + y B}
\]
and applying Theorem~\ref{thm:global} to get 
\[
\hat{p} \star \hat{q} = \expect{Q}{\mydet{x I + y A Q B Q^T}}.
\]
The formula for $[p \boxtimes q]$ follows by setting $y = -1$.

\subsubsection{Additive convolution of singular values}\label{sec:rectangular}
Given matrices $A, B \in \MM_{d, n+d}$ and polynomials
\[
p(x) = \mydet{xI - AA^T}
\AND
q(x) = \mydet{xI - BB^T}
\]
the {\em rectangular additive convolution} of $p$ and $q$ can be written as 
\[
[p \boxplus_{d}^n q] 
= 
\expect{Q, R}{\mydet{xI - (A + Q B R)(A + Q B R)^T}}
\]
where $Q$ and $R$ can be chosen to be any (independent) minor-orthogonal 
ensembles of the appropriate size (see \cite{dui}).
This can be achieved by 
setting 
\[
\hat{r}(x, y, z) = \mydet{x I + ( y A + z Q B R) (A^T + R^T B^T Q^T }.
\]
Assuming $Q$ and $R$ are independent, we can do the expectation in $R$ using 
Theorem~\ref{thm:local} to get
\[
\expect{R}{\hat{r}(x, y, z)} 
= L_m^{y, z} \left\{ \mydet{x I + y AA^T + z Q BB^T Q^T} \right\}.
\]
and then we can compute the remaining expectation 
\[
\expect{Q}{\mydet{x I + y AA^T + z Q BB^T Q^T}}
\]
in terms of $p$ and $q$ using the method in Section~\ref{sec:additive}.

\subsubsection{Multiplicative convolution of non-Hermitian eigenvalues} 
\label{sec:nonHermitian}

For the purpose of studying the eigenvalues of non-Hermitian matrices, one 
could use the polynomial convolutions in the previous sections, but one quickly 
realizes that they do not hold as much information as one would like.
This is due in part to the fact that, unlike in the Hermitian case, there can 
be nontrivial relations between the left eigenvectors and right eigenvectors of 
a non-Hermitian matrix (we refer the interested reader to \cite{benno} where a 
multivariate theory is developed).
However it is well known that a non-Hermitian matrix $A$ can be written as $A = 
H + K$ where
\[
H = \frac{A + A^*}{2}
\AND
K = i\frac{A - A^*}{2}
\]
are both Hermitian.
One can then consider the multivariate polynomial 
\[
p(x, y, z) = \mydet{x I + y H + z K}
\]
for which an additive convolution follows easily from the Hermitian version in 
Section~\ref{sec:additive}.
The multiplicative version, however, is more complicated.
Given pairs of Hermitian matrices $(H_1, K_1)$ and $(H_2, K_2)$, one would like 
to ``convolve'' these matrices in a way that preserves the dichotomy between 
real and imaginary parts.
One such possibility would be the polynomial
\[
r(x, y, z) 
= \expect{Q}{\mydet{x I + y (H_1 Q H_2 Q^* - K_1 Q K_2 Q^*) + z(H_1 Q K_2 Q^* + 
K_1 Q H_2 Q^*) }}
\]
but it is not clear (a priori) that the coefficients of this polynomial are 
functions of the coefficients of the polynomials
\[
p_1(x) = \mydet{x I + y H_1 + z K_1}
\AND
p_2(x) = \mydet{x I + y H_2 + z K_2}.
\]
However it is easy to compute $r(x, y, z)$ using Theorem~\ref{thm:global}.
Letting 
\begin{align*}
q_1(x, a, b, c, d) &= \mydet{x I + a H_1 + b H_1 + c K_1 + d K_1} \\
q_2(x, a, b, c, d) &= \mydet{x I + a H_2 + b K_2 + c H_2 + d K_2}
\end{align*}
we have that 
\[
[q_1 \star q_2](x, a, b, c, d) = 
\expect{Q}{\mydet{x I + a H_1 Q H_2 Q^* + b H_1 Q K_2 Q^* + c K_1 Q H_2 Q^* + d 
K_1 Q K_2 Q^*}}
\]
and so $r(x, y, z) = [q_1 \star q_2](x, y, -y, z, z)$.

\section{An additive convolution for generalized singular values} 
\label{sec:gsvd}

There are three standard ensembles that one studies in random matrix 
theory: the Wigner ensemble, Wishart ensemble, and Jacobi ensemble 
\cite{forrester}.
All are alike in that they can be derived from matrices with independent 
Gaussian entries; the difference between them, as was first noted by Edelman 
\cite{edelman}, can be 
paralleled to different matrix decompositions.
The Wigner ensemble is Hermitian and the relevant distribution is the 
eigenvalues distribution.
The Wishart ensemble is often thought of as a Hermitian ensemble (with an 
eigenvalue distribution) but in some sense the more natural way to view it is 
as a distribution on singular values (which, as the first step in calculate 
them, you form a Hermitian matrix).
The Jacobi ensemble, in this ansatz, is most naturally viewed as a distribution 
on ``generalized singular values.'' 

One can attempt to explore this trichotomy further by studying how the
eigenvalues/singular values/generalized singular values of matrices behave with 
respect to more general matrix operations (and more general random 
matrices).
This is one of the motivations behind the polynomial convolutions mentioned in 
Section~\ref{sec:apps}: the convolution in Section~\ref{sec:additive} computes 
statistics concerning the eigenvalues of a unitarily invariant sum, whereas the 
convolution in Section~\ref{sec:rectangular} does similarly in the case of 
singular values.
The purpose of this section is to introduce a polynomial that can be used to 
study the final case: a unitarily invariant addition of generalized singular 
values.
While the previous two could be accomplished using univariate convolutions, it 
will become clear that this is not possible for the general singular value 
decomposition.
For those interested in other aspects of the GSVD should consult the references 
\cite{golub, 
gsvd}.

Before jumping into a discussion regarding the generalized singular value 
decomposition, it will be useful for us to recall the definition of the 
pseudo-inverse of a matrix.
Given any matrix $X \in \MM_{m, n}$ with rank $\rr$, the normal singular value 
decomposition of matrices allows us to write $X = U \Sigma V^{\trans}$ where
\begin{itemize}
\item $U \in \MM_{m, r}$ satisfies $U^\trans U = I$
\item $V \in \MM_{n, r}$ satisfies $V^\trans V = I$
\item $\Sigma \in \MM_{r, r}$ is diagonal and invertible. 
\end{itemize}
The {\em pseudo-inverse} of $X$ (written $X^{\inv} \in \MM_{n, m}$) is then 
defined to be $X^{\inv} = V \Sigma^{-1} U^{\trans}$.
The name ``pseudo-inverse'' comes from the fact that
\begin{itemize}
\item $X X^{\inv} \in \MM_{m, m}$ is the projection onto the column space of 
$V$, and
\item $X^{\inv} X \in \MM_{n, n}$ is the projection onto the column space of 
$U$.
\end{itemize}
So, in particular, if $n = \rr$ then $X^{\inv} X = I_{n}$ and if $m = n = \rr$ 
then $X$ is invertible and $X^{\inv} = X^{-1}$.

Now fix integers $\ss, \tt, \kk$ and let $M \in \MM_{(\ss + \tt), \kk}$ have 
rank $\rr$ and block structure
\[
M = \begin{bmatrix}
M_1 \\
M_2
\end{bmatrix}
\begin{array}{c} \ss \\ \tt \end{array}
\]
The {\em generalized singular value decomposition (GSVD)} provides a 
decomposition of $M_1$ and $M_2$ as 
\[
M_1 = U_1 C H
\AND
M_2 = U_2 S H
\]
where 
\begin{itemize}
\item $U_1 \in \MM_{\ss, \rr}$ and $U_2 \in \MM_{\tt, \rr}$ satisfy 
$U_1^\trans U_1 = U_2^\trans U_2 = I_\rr$
\item $C, S \in \MM_{\rr, \rr}$ are positive semidefinite diagonal matrices 
with $C^\trans C + S^\trans S = I$, and
\item $H \in \MM_{\rr, \kk}$ is some matrix with rank $\rr$.
\end{itemize}
In particular, the diagonal entries of $C$ and $S$ satisfy $c_i^2 + s_i^2 = 1$, 
and as such,  the matrices $C$ and $S$ are often referred to as {\em cosine} 
and {\em sine} matrices.
Note that when $M_2$ has rank $\rr$, the matrix $S$ will be invertible and then 
\[
M_1 M_2^{\inv} 
= (U_1 C H) (U_2 S H)^{\inv}
= U_1 C S^{-1} U_2
\]
will be the (usual) SVD of $M_1 M_2^{\inv}$, the reason for the nomenclature 
``generalized" SVD.

When $M$ has rank $\kk$, there is an easy way to find the generalized singular 
values without needing to form the entire decomposition.
Letting $W_1 = M_1^\trans M_1$ and $W_2 = M_2^\trans M_2$, the GSVD implies 
that 
\begin{equation}\label{eq:W}
W = (W_1 + W_2)^{-1/2} W_1 (W_1 + W_2)^{-1/2}
\end{equation}
is a positive semidefinite Hermitian matrix which is unitarily similar to 
$C^\trans C$ (all of whose eigenvalues are in the interval $[0, 1]$).
Thus generalized singular values can be found directly from the characteristic 
polynomial
\begin{equation}\label{eq:gsvdpoly}
\mydet{x I - (W_1 + W_2)^{-1/2} W_1 (W_1 + W_2)^{-1/2}}
= 
\mydet{(W_1 + W_2)^{-1}}\mydet{(x-1)W_1 + x W_2}
\end{equation}

So now assume we are given $M, N \in \MM_{(\ss + \tt), \kk}$ with block 
structure
\[
M = \begin{bmatrix}
M_1 \\
M_2
\end{bmatrix}
\begin{array}{c} \ss \\ \tt \end{array}
\AND
N = \begin{bmatrix}
N_1 \\
N_2
\end{bmatrix}
\begin{array}{c} \ss \\ \tt \end{array}
\]
and we form the random matrix
\[
P = \begin{bmatrix}
P_1 \\
P_2
\end{bmatrix}
= 
\begin{bmatrix}
M_1 + R_1 N_1 Q \\
N_2 + R_2 N_2 Q
\end{bmatrix}
\]
where $R_1, R_2, Q$ are independent signed permutation matrices of the 
appropriate sizes.
Then the natural question is: what (if anything) can we say about the 
generalized singular values of $P$ given the generalized singular values of $M$ 
and $N$.

By what we observed in (\ref{eq:gsvdpoly}), this means finding a correspondence 
between the polynomials
\[
\mydet{(x-1)M_1 + x M_2}
\AND
\mydet{(x-1)N_1 + x N_2}
\AND
\mydet{(x-1)P_1 + x P_2}
\]
The obvious first attempt is to consider the polynomials 
\[
p(x) = \mydet{x A_1^\trans A_1  + A_2^\trans A_2}.
\]
However when one starts to perturb $A_1$ and $A_2$ independently, one quickly 
realizes that simply knowing the generalized singular values are not enough --- 
information about $A_1$ and $A_2$ themselves is needed.
This motivates using a polynomial that keeps $A_1$ and $A_2$ independent (to 
some extent), which leads to the following definition:

Given $A \in \MM_{(\ss + \tt), \kk}$ with block 
structure
\[
A = \begin{bmatrix}
A_1 \\
A_2
\end{bmatrix}
\begin{array}{c} \ss \\ \tt \end{array}
\]
we define the {\em generalized singular value characteristic polynomial 
(GSVCP)} to be 
\begin{equation}\label{eq:trivariate}
p_A(x, y, z) = 
\mydet{x I + y A_1^\trans A_1 + z A_2^\trans A_2}
\end{equation}
The next theorem shows that (\ref{eq:trivariate}) defines a valid convolution 
--- that is, one can compute the GSVCP of a unitarily invariant sum of matrices 
from the GSVCPs of the summands.

\begin{theorem}\label{thm:gsvd}
Let $M, N, W \in \MM_{(\ss + \tt), \kk}$ with block structure
\[
M = \begin{bmatrix}
M_1 \\
M_2
\end{bmatrix}
\begin{array}{c} \ss \\ \tt \end{array}
\AND
N = \begin{bmatrix}
N_1 \\
N_2
\end{bmatrix}
\begin{array}{c} \ss \\ \tt \end{array}
\AND
W = \begin{bmatrix}
W_1 \\
W_2
\end{bmatrix}
= 
\begin{bmatrix}
M_1 + R_1 N_1 Q \\
M_2 + R_2 N_2 Q
\end{bmatrix}
\]
where $R_1, R_2, Q$ are independent, uniformly distributed, signed permutation 
matrices of the 
appropriate sizes and let
\begin{alignat*}{4} 
p_M(x, y, z)
&=
\mydet{x I + y M_1^\trans M_1 + z M_2^\trans M_2} 
&=& \sum_{j, k} 
\frac{x^{\kk-j-k}}{(\kk-j-k)!}
\frac{y^j}{(\ss-j)!} 
\frac{z^k}{(\tt-k)!}
p_{jk}
\\
p_N(x, y, z)
&=
\mydet{x I + y N_1^\trans N_1 + z N_2^\trans N_2} 
&=&
\sum_{j, k} 
\frac{x^{\kk-j-k}}{(\kk-j-k)!}
\frac{y^j}{(\ss-j)!} 
\frac{z^k}{(\tt-k)!}
q_{jk}
\\
p_W(x, y, z)
&=
\mydet{x I + y W_1^\trans W_1 + z W_2^\trans W_2} 
&=&
\sum_{j, k} 
\frac{x^{\kk-j-k}}{(\kk-j-k)!}
\frac{y^j}{(\ss-j)!} 
\frac{z^k}{(\tt-k)!}
r_{jk}
\end{alignat*}
be their GSVCPs, where each 
$r_{jk} = r_{jk}(R_1, R_2, Q)$ is a random variable.
Then 
\[
\expect{Q, R_1, R_2}{r_{jk}} = 
\begin{cases}
\frac{1}{\kk!\ss!\tt!}
\sum_{\beta = 0}^{j} \sum_{\delta = 0}^{k}
p_{\beta,\delta} q_{j-\beta, k-\delta}
& \text{for $j \leq \ss, k \leq \tt, j+k \leq \kk$} \\
0 & \text{otherwise}
\end{cases}
\]
\end{theorem}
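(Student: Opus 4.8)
The plan is to reduce Theorem~\ref{thm:gsvd} to a single application of the global convolution theorem preceded by a "doubling" trick, exactly in the spirit of the non-Hermitian example in Section~\ref{sec:nonHermitian}. First I would observe that $W_1 = M_1 + R_1 N_1 Q$ and $W_2 = M_2 + R_2 N_2 Q$ are built from two \emph{independent} perturbations $R_1$ and $R_2$, so $W_1^\trans W_1 = (M_1 + R_1 N_1 Q)^\trans(M_1 + R_1 N_1 Q)$ and similarly for $W_2$; the cross terms involve $M_i^\trans R_i N_i Q$, which are \emph{not} of the clean form needed by Theorem~\ref{thm:global}. The standard way around this is to introduce auxiliary variables: one wants to realize $p_W(x,y,z)$ as a specialization of a $\star$-convolution of a polynomial built from $M_1, M_2$ (and identities) and a polynomial built from $N_1, N_2$ (and identities), where the roles of $M$ and $N$ are kept on separate sides of $Q$. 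Concretely, I would write $W_i^\trans W_i$ as a sum of four blocks $M_i^\trans M_i$, $M_i^\trans R_i N_i Q$, $(R_i N_i Q)^\trans M_i$, and $Q^\trans N_i^\trans N_i Q$, assign a fresh formal variable to each block, apply Theorem~\ref{thm:global} in the $Q$-variable to the two resulting multivariate determinantal polynomials, and then specialize the formal variables to recover the three summands with their correct $x,y,z$ weights. The expectations over $R_1$ and $R_2$ are handled separately, using the fact (as in the diagonal-permanent discussion and as used implicitly throughout Section~\ref{sec:rectangular}) that for signed permutation matrices the cross terms $\expect{R_i}{M_i^\trans R_i N_i Q}$-type contributions either vanish or are absorbed, so that effectively $\expect{R_i}{W_i^\trans W_i}$ behaves like $M_i^\trans M_i + Q^\trans N_i^\trans N_i Q$ at the level of the relevant coefficients.

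The cleanest route is probably to combine the two ``sides'' before taking any expectation. I would set
\[
\widehat{p}_M(x, y_1, y_2, z_1, z_2) = \mydet{x I + y_1 M_1^\trans M_1 + y_2 M_1^\trans M_1 + z_1 M_2^\trans M_2 + z_2 M_2^\trans M_2}
\]
and an analogous $\widehat{p}_N$ with $N$'s, chosen so that the block structure on the two sides of $Q$ lines up with the factorization $W_i^\trans W_i$; then Theorem~\ref{thm:global} gives $\widehat{p}_M \star \widehat{p}_N$ as an expectation over $Q$ of a determinant whose argument, after specialization of the auxiliary variables and after taking expectations over $R_1, R_2$, equals $\expect{Q,R_1,R_2}{p_W(x,y,z)}$. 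The combinatorial heart is then to read off, from the definition~(\ref{eq:conv}) of $\star$, that the coefficient $r_{jk}$ picks up precisely $\sum_{\beta=0}^j \sum_{\delta=0}^k p_{\beta\delta} q_{j-\beta,k-\delta}$: this is just the statement that $\star$-convolution in a pair of homogeneous-degree-tracking variables is ordinary Cauchy convolution of coefficient sequences, once one accounts for the normalizing factorials $\alpha!$ in~(\ref{eq:conv}) against the $1/(\kk-j-k)!\,(\ss-j)!\,(\tt-k)!$ normalization chosen in the statement. The support condition $j \le \ss$, $k \le \tt$, $j+k \le \kk$ is automatic: $M_1^\trans M_1$ has rank at most $\ss$, so $y$ can appear to degree at most $\ss$ in $p_M$, and similarly for $z$ and for the total degree $\kk$ of the determinant of a $\kk \times \kk$ matrix; outside that range every coefficient is zero on both sides.

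I expect the main obstacle to be bookkeeping rather than any genuinely new idea: specifically, choosing the auxiliary-variable substitution so that (i) the expectation over $R_1$ and $R_2$ really does kill the unwanted cross terms $M_i^\trans R_i N_i Q + (R_i N_i Q)^\trans M_i$ in the relevant coefficients, and (ii) the $Q$-side block structure matches the hypothesis of Theorem~\ref{thm:global} (which requires the two polynomials to be $\mydet{\sum_i x_i A_i}$ and $\mydet{\sum_i x_i B_i}$ with the \emph{same} index set of variables, and produces $\expect{Q}{\mydet{\sum_i x_i A_i Q B_i Q^\trans}}$). Point (i) may need a short separate lemma --- essentially that $\expect{R}{[\cdot]}$ applied to a determinant linear in $R$ and $R^\trans$ on ``opposite sides'' vanishes unless the $R$ and $R^\trans$ contributions pair up, which is the $x=y=0$ shadow of Lemma~\ref{lem:first}/Lemma~\ref{lem:second}, or alternatively follows from Theorem~\ref{thm:local} with $A_1 = 0$. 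Once (i) and (ii) are pinned down, the rest is a routine comparison of coefficients, with the factorial normalizations in the theorem statement reverse-engineered precisely to make the answer come out as a clean double Cauchy sum divided by $\kk!\,\ss!\,\tt!$.
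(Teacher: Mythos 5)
Your overall architecture (introduce auxiliary variables, average over $Q$ via Theorem~\ref{thm:global}, treat $R_1,R_2$ separately) matches the paper's, but the step where you dispose of $R_1$ and $R_2$ contains a genuine gap, and it is exactly the quantitative heart of the theorem. You claim that after averaging over $R_i$ the cross terms $M_i^\trans R_i N_i Q + Q^\trans N_i^\trans R_i^\trans M_i$ ``either vanish or are absorbed,'' so that $\expect{R_i}{W_i^\trans W_i}$ effectively behaves like $M_i^\trans M_i + Q^\trans N_i^\trans N_i Q$ at the level of the relevant coefficients. That is false: the correct statement is Theorem~\ref{thm:local} itself (applied with $A_1 = M_i^\trans$, $A_2 = Q^\trans N_i^\trans$, $B_1 = M_i$, $B_2 = N_iQ$, $R = R_i^\trans$, and the local dimensions $\ss$, $\tt$), and its conclusion is not the cross-term-free determinant but that determinant hit with the operator $L_{\ss}^{s,t}$ (resp.\ $L_{\tt}^{u,v}$), whose weights $\frac{(m-i)!(m-j)!}{m!(m-i-j)!}$ are nontrivial as soon as both exponents are positive. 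These weights are precisely where the factor $\frac{1}{\ss!\,\tt!}$, the uniform denominators $(\ss-j)!\,(\tt-k)!$, and the truncation $j\le\ss$, $k\le\tt$ in the final formula come from. If the cross terms merely vanished with weight one, the coefficient of $x^{\kk-j-k}y^jz^k$ would come out as $\frac{1}{\kk!}\sum_{\beta+\gamma=j,\ \delta+\sigma=k}\frac{p_{\beta\delta}q_{\gamma\sigma}}{(\kk-j-k)!(\ss-\beta)!(\ss-\gamma)!(\tt-\delta)!(\tt-\sigma)!}$, a $(\beta,\gamma,\delta,\sigma)$-dependent reweighting that does not equal the claimed $\frac{1}{\kk!\ss!\tt!}\cdot\frac{1}{(\kk-j-k)!(\ss-j)!(\tt-k)!}\sum p_{\beta\delta}q_{j-\beta,k-\delta}$. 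Your fallback (``follows from Theorem~\ref{thm:local} with $A_1=0$'', or the ``$x=y=0$ shadow'' of Lemma~\ref{lem:first}) cannot repair this, because setting $A_1=0$ discards the $M_i$ part entirely, and the pairing lemma shows not only which terms survive but that the survivors are rescaled by $1/\binom{m}{i}$-type factors---the rescaling you are omitting.

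Two smaller points. First, the auxiliary polynomial you propose, $\widehat{p}_M = \mydet{xI + y_1 M_1^\trans M_1 + y_2 M_1^\trans M_1 + z_1 M_2^\trans M_2 + z_2 M_2^\trans M_2}$, is not the right input for Theorem~\ref{thm:global}: the slots that will carry $Q N_i^\trans N_i Q^\trans$ on the other side must be occupied by \emph{identity} matrices, i.e.\ one needs $g = \mydet{xI + sM_1^\trans M_1 + tI + uM_2^\trans M_2 + vI} = p_M(x+t+v,s,u)$ and $h = \mydet{xI + sI + tN_1^\trans N_1 + uI + vN_2^\trans N_2} = p_N(x+s+u,t,v)$; the shifts $x\mapsto x+t+v$, $x\mapsto x+s+u$ then generate binomial redistributions that are needed in the coefficient bookkeeping and are absent from your sketch. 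Second, your rank argument for the support condition is fine, but the ``routine comparison of coefficients'' at the end is only routine once the $L_{\ss}^{s,t}L_{\tt}^{u,v}$ weights are in place; with them, the per-term factors $\frac{1}{(\ss-\beta)!(\ss-\gamma)!}$ collapse to the uniform $\frac{1}{\ss!(\ss-\beta-\gamma)!}$ and the clean Cauchy double sum appears after specializing $s=t=y$, $u=v=z$. So the fix is concrete: replace your ``cross terms vanish'' step by two genuine applications of Theorem~\ref{thm:local} (one for $R_2$, one for $R_1$), and carry the resulting $L$-operators through the star product before specializing the variables.
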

\begin{proof}
We start by changing variables to match Theorem~\ref{thm:local}: 
let $f(x, s, t, u, v)$ denote the polynomial
\[
\expect{Q, R_1, R_2}{
\mydet{x I 
+ (s M_1 + t R_1 N_1 Q)^\trans (M_1 + R_1 N_1 Q) 
+ (u M_2 + v R_2 N_2 Q)^\trans (M_2 + R_2 N_2 Q)}}. 
\]
We now do the expectations separately, starting with $R_2$ and then $R_1$.
By Theorem~\ref{thm:local}, we get
\begin{align*}
f(x, s, t, u, v)
&= 
L_{\tt}^{u, v}
\expect{Q, R_1}{
\mydet{x I 
+ (s M_1 + t R_1 N_1 Q)^\trans (M_1 + R_1 N_1 Q) 
+ (u M_2^\trans M_2 + v Q^\trans N_2^\trans N_2 Q }}. 
\\&= 
L_{\ss}^{s, t}L_{\tt}^{u, v}  \expect{Q}{
\mydet{x I 
+ (s M_1^\trans M_1 + t Q^\trans N_1^\trans N_1 Q) 
+ (u M_2^\trans M_2 + v Q^\trans N_2^\trans N_2 Q }}. 
\end{align*}
By Theorem~\ref{thm:global} we have
\[
\expect{Q}{
\mydet{x I 
+ (s M_1^\trans M_1 + t Q^\trans N_1^\trans N_1 Q) 
+ (u M_2^\trans M_2 + v Q^\trans N_2^\trans N_2 Q }}
=
[g \star h](x, s, t, u, v)
\]
where
\[
g(x, s, t, u, v) 
= \mydet{ xI + s M_1^\trans M_1 + t I + u M_2^\trans M_2 + v I}
= p_M(x + t + v, s, u) 
\]
and
\[
h(x, s, t, u, v) 
= \mydet{ xI + s I + t N_1^\trans N_1 + u I + v N_2^\trans N_2}
= p_N(x + s + u,t, v)
\]
are each $\kk$-homogeneous polynomials.
We can now go in the reverse direction to compute $f(x, s, t, u, v)$ from the 
expansions in 
the hypothesis.
Firstly, we have
\begin{align*}
g(x, s, t, u, v)
&=
\sum_{j,k} 
\frac{(x + t + v)^{\kk-j-k}}{(\kk-j-k)!} 
\frac{s^j}{(\ss-j)!}
\frac{u^k}{(\tt-k)!} 
p_{jk}
\\&=
\sum_{j,k} \sum_{a,b} 
\frac{x^{\kk-j-k-a-b}}{(\kk-j-k-a-b)!}
\frac{t^a}{a!}
\frac{v^b}{b!} 
\frac{s^j}{(\ss-j)!}
\frac{u^k}{(\tt-k)!} 
p_{jk}
\\&= 
\sum_{\alpha + \beta + \gamma + \delta + \sigma = \kk}
\frac{x^\alpha s^\beta t^{\gamma} u^\delta 
v^{\sigma}}{\alpha!(\ss-\beta)!(\tt-\delta)!\gamma!\sigma!}
p_{\beta \delta}
\end{align*}
and similarly 
\begin{align*}
h(x, s, t, u, v) 
&=
\sum_{j,k} 
\frac{(x + s + u)^{\kk-j-k}}{(\kk-j-k)!} 
\frac{t^j}{(\ss-j)!}
\frac{v^k}{(\tt-k)!} 
p_{jk}
\\&=
\sum_{j,k} \sum_{a,b} 
\frac{x^{\kk-j-k-a-b}}{(\kk-j-k-a-b)!}
\frac{s^a}{a!}
\frac{u^b}{b!} 
\frac{t^j}{(\ss-j)!}
\frac{v^k}{(\tt-k)!} 
q_{jk}
\\&= 
\sum_{\alpha + \beta + \gamma + \delta + \sigma = \kk}
\frac{x^\alpha s^\beta t^{\gamma} u^\delta 
v^{\sigma}}{\alpha!(\ss-\gamma)!(\tt-\sigma)!\beta!\delta!}
q_{\gamma \sigma}
\end{align*}
Hence by definition of the star product, we have
\[
[g \star h](x, s, t, u, v)
= \frac{1}{\kk!}\sum_{\alpha, \beta, \gamma, \delta, \sigma}
\frac{x^\alpha s^\beta t^{\gamma} u^\delta v^{\sigma}} 
{\alpha!(\ss-\beta)!(\ss-\gamma)!(\tt-\delta)!(\tt-\sigma)! }
p_{\beta\delta}q_{\gamma \sigma}
\]
and so
\begin{align*}
f(x, s, t, u, v) 
&= L_{\ss}^{s, t}L_{\tt}^{u, v} \{ ~ [g \star h](x, s, t, u, v)~ \}
\\&= \frac{1}{\kk!\ss!\tt!}
\sum_{\substack{\alpha + \beta + \gamma + \delta + \sigma = \kk \\ \beta + 
\gamma \leq \ss \\ \delta + \sigma \leq \tt}}
\frac{x^\alpha s^\beta t^{\gamma} u^\delta v^{\sigma}}
{\alpha!(\ss-\beta-\gamma)!(\tt-\delta-\sigma)!}
p_{\beta \delta}
q_{\gamma \sigma}
\end{align*}
Putting all of this together, the polynomial we are interested in is
\begin{align*}
\expect{Q, R_1, R_2}{ p_W(x, y, z) } 
&= f(x, y, y, z, z)
\\&= 
\frac{1}{\kk!\ss!\tt!}
\sum_{\substack{\alpha + \beta + \gamma + \delta + \sigma = \kk \\ \beta + 
\gamma \leq \ss \\ \delta + \sigma \leq \tt}}
\frac{x^\alpha y^{\beta + \gamma} z^{\delta + \sigma}}
{\alpha!(\ss-\beta-\gamma)!(\tt-\delta-\sigma)!}
p_{\beta \delta}
q_{\gamma \sigma}
\\&= 
\frac{1}{\kk!\ss!\tt!}
\sum_{\substack{j\leq \ss \\ k \leq \tt \\ j+k \leq \kk}}
\frac{x^{\kk-j-k}}{(\kk-j-k)!} \frac{y^j}{ (\ss - j)! }\frac{z^k}
{(\tt-k)!}
\sum_{\beta = 0}^{j} \sum_{\delta = 0}^{k}
p_{\beta,\delta} q_{j-\beta, k-\delta}
\end{align*}
as required.
\end{proof}

Note that while Theorem~\ref{thm:gsvd} considers fixed matrices $M$ and $N$, 
one can easily extend it to random matrices using linearity of expectation.
We finish the section by observing that the convolution described by 
Theorem~\ref{thm:gsvd} has a remarkably simple form when the polynomials are 
expressed in the context of differential operators.

\begin{corollary}\label{cor:gsvd}
Let $p_M, p_N, p_W$ be the polynomials in Theorem~\ref{thm:gsvd} and let $P, Q$ 
be 
bivariate polynomials for which 
\[
y^{\ss} z^{\tt} p_M(x, 1/y, 1/z) 
= P(\partial_x \partial_y, \partial_x \partial_z) \{ 
x^\kk y^{\ss} z^{\tt} \}
\]
and
\[
y^{\ss} z^{\tt} p_N(x, 1/y, 1/z) 
= Q(\partial_x \partial_y, \partial_x \partial_z) \{ 
x^\kk y^{\ss} z^{\tt} \}.
\]
Then 
\[
\expect{R_1, R_2, Q}{y^{\ss} z^{\tt} p_W(x, 1/y, 1/z)} = 
P(\partial_x \partial_y, \partial_x \partial_z)Q(\partial_x \partial_y, 
\partial_x \partial_z) \{ 
x^{\kk} y^{\ss} z^{\tt} \}.
\]
\end{corollary}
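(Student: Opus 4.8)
The plan is to treat Corollary~\ref{cor:gsvd} as a change of dictionary: the hypotheses re-encode the coefficient arrays $(p_{jk})$ and $(q_{jk})$ of $p_M$ and $p_N$ (as laid out in Theorem~\ref{thm:gsvd}) as ordinary bivariate polynomials $P$ and $Q$, and the assertion is that the doubly-$L$-weighted star-combination produced by Theorem~\ref{thm:gsvd} becomes, in this encoding, nothing more than the product $PQ$. So the proof will be an identification of coefficients on the two sides; everything needed has already been computed in Theorem~\ref{thm:gsvd}, and no new convolution machinery is invoked.

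First I would record how the two commuting operators $\partial_x\partial_y$ and $\partial_x\partial_z$ act on the single monomial $x^{\kk}y^{\ss}z^{\tt}$:
\[
(\partial_x\partial_y)^{j}(\partial_x\partial_z)^{k}\bigl\{x^{\kk}y^{\ss}z^{\tt}\bigr\}
= \frac{\kk!}{(\kk-j-k)!}\,\frac{\ss!}{(\ss-j)!}\,\frac{\tt!}{(\tt-k)!}\; x^{\kk-j-k}y^{\ss-j}z^{\tt-k},
\]
with the usual convention $1/\ell!=0$ for $\ell<0$, so the right-hand side vanishes exactly when $j>\ss$, $k>\tt$, or $j+k>\kk$. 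The monomials appearing here are pairwise distinct, so this identity reads off operator coefficients uniquely. Next I would expand $y^{\ss}z^{\tt}p_M(x,1/y,1/z) = \sum_{j,k}\frac{p_{jk}}{(\kk-j-k)!(\ss-j)!(\tt-k)!}x^{\kk-j-k}y^{\ss-j}z^{\tt-k}$ using the expansion of $p_M$ from Theorem~\ref{thm:gsvd} — this expansion is legitimate, and the $P,Q$ in the corollary are well-defined and unique, precisely because $p_M$ and $p_N$ are supported on monomials $x^{\kk-j-k}y^{j}z^{k}$ with $j\le\ss$, $k\le\tt$, $j+k\le\kk$, a consequence of $\mathrm{rank}\,M_1^{\trans}M_1\le\ss$ and $\mathrm{rank}\,M_2^{\trans}M_2\le\tt$. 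Comparing this expansion with the operator action forces $P(u,v) = \frac{1}{\kk!\ss!\tt!}\sum_{j,k}p_{jk}u^{j}v^{k}$ and likewise $Q(u,v) = \frac{1}{\kk!\ss!\tt!}\sum_{j,k}q_{jk}u^{j}v^{k}$.

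Because $\partial_x\partial_y$ and $\partial_x\partial_z$ commute, $P(\partial_x\partial_y,\partial_x\partial_z)Q(\partial_x\partial_y,\partial_x\partial_z)=(PQ)(\partial_x\partial_y,\partial_x\partial_z)$, where
\[
PQ(u,v) = \frac{1}{(\kk!\ss!\tt!)^{2}}\sum_{j,k}\Bigl(\sum_{\beta=0}^{j}\sum_{\delta=0}^{k}p_{\beta\delta}\,q_{j-\beta,k-\delta}\Bigr)u^{j}v^{k}.
\]
Applying this operator to $x^{\kk}y^{\ss}z^{\tt}$ via the first identity cancels one factor of $\kk!\ss!\tt!$ against the square, leaving
\[
\frac{1}{\kk!\ss!\tt!}\sum_{j,k}\Bigl(\sum_{\beta=0}^{j}\sum_{\delta=0}^{k}p_{\beta\delta}q_{j-\beta,k-\delta}\Bigr)\frac{x^{\kk-j-k}}{(\kk-j-k)!}\frac{y^{\ss-j}}{(\ss-j)!}\frac{z^{\tt-k}}{(\tt-k)!}.
\]
On the other side, from the expansion of $p_W$ in Theorem~\ref{thm:gsvd} one gets $y^{\ss}z^{\tt}p_W(x,1/y,1/z)=\sum_{j,k}r_{jk}\frac{x^{\kk-j-k}}{(\kk-j-k)!}\frac{y^{\ss-j}}{(\ss-j)!}\frac{z^{\tt-k}}{(\tt-k)!}$; taking $\expect{R_1,R_2,Q}{\cdot}$ and substituting the formula of Theorem~\ref{thm:gsvd} for $\expect{R_1,R_2,Q}{r_{jk}}$ reproduces exactly the display above (the case distinction $j\le\ss$, $k\le\tt$, $j+k\le\kk$ is automatically enforced by the vanishing of $1/(\ss-j)!$, $1/(\tt-k)!$, $1/(\kk-j-k)!$). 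Matching the two term by term finishes the proof.

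I do not expect a serious obstacle: the entire content is the bookkeeping that converts the $\alpha!$ weights of the $\star$-product and the factorial normalizations of the GSVCP expansion into the clean statement that $\star$ becomes multiplication of $P$ and $Q$. The one point that genuinely deserves care is the existence and uniqueness of $P$ and $Q$ in the second step, which is why I isolate the rank-based support statement for $p_M$ and $p_N$; once that is in hand the rest is a direct comparison of coefficients using Theorem~\ref{thm:gsvd}.
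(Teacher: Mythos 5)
Your proof is correct and is essentially the argument the paper intends: the corollary is stated without proof as a direct reformulation of Theorem~\ref{thm:gsvd}, and your coefficient comparison --- computing $(\partial_x\partial_y)^{j}(\partial_x\partial_z)^{k}\{x^{\kk}y^{\ss}z^{\tt}\}$, reading off $P(u,v)=\frac{1}{\kk!\ss!\tt!}\sum_{j,k}p_{jk}u^{j}v^{k}$ and likewise for $Q$, and matching $PQ$ applied to $x^{\kk}y^{\ss}z^{\tt}$ against the formula for $\expect{R_1,R_2,Q}{r_{jk}}$ --- is exactly the required bookkeeping. One small correction: $P$ and $Q$ are not literally unique (one may add monomials $u^{j}v^{k}$ with $j>\ss$, $k>\tt$, or $j+k>\kk$, which annihilate $x^{\kk}y^{\ss}z^{\tt}$), but this is harmless since any such extra term, when composed with the other operator factor, still exceeds one of the degree bounds and hence annihilates $x^{\kk}y^{\ss}z^{\tt}$, so your identification of the canonical $P,Q$ yields the statement for every admissible choice.
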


Corollary~\ref{cor:gsvd} suggests that if $p_A(x, y, z)$ is the polynomial in 
(\ref{eq:trivariate}), then a more reasonable polynomial to consider would be 
\[
q_A(x, y, z) 
= y^{\ss} z^{\tt} p_A(x, 1/y, 1/z) = y^{\ss-\kk}z^{\tt-\kk} \mydet{x y z I + z 
A_1^\trans A_1 + y A_2^\trans A_2}.
\]
Another advantage to this alternative form is that there is a more direct 
matrix model that one can work with, as one can easily check that
\[
\det
\begin{bmatrix}
x I_{\kk}  & A_1^\trans & A_2^\trans \\
 A_1 & y I_{\ss}  & 0 \\
 A_2 & 0  & z I_{\tt}
\end{bmatrix}
= y^{\ss - \kk} z^{\tt-\kk} \mydet{x y z I - z A_1^\trans A_1 - y A_2^\trans 
A_2 }.
\]
Obviously the two are simple transformations from each other; we mention it 
because particular applications can be more well suited to one versus 
the other.

\section{Open Problems}\label{sec:conc}

The proof presented in Section~\ref{sec:local} applies more generally than 
Theorem~\ref{thm:local} in the respect that it holds for all minor-orthogonal 
ensembles. 
We suspect that Theorem~\ref{thm:global} has a similar generalization, but have 
not able to prove it.
This is not much of a hindrance when it comes to theoretical applications: the 
majority of the minor-orthogonal ensembles that one comes across in random 
matrix theory contain the signed permutation matrices as a subgroup and so 
Theorem~\ref{thm:global} can be extended them by the averaging argument in 
Corollary~\ref{cor:mo}.
The one notable situation where this is not the case is that of the uniform 
distribution over the standard representation of $S_{n+1}$ (what you get when 
you turn the collection of $(n+1) \times (n+1)$ permutation matrices into $n 
\times n$ matrices by projecting each one orthogonally to the constant vector).
This is (as far as the author knows) the minor-orthogonal ensemble with the 
smallest support and so is often useful for computational purposes.

Those familiar with the connection between polynomial convolutions and free 
probability (see, for example, \cite{ffmain}), might recognize the 
convolutions in Sections~\ref{sec:additive}, \ref{sec:multiplicative}, and 
\ref{sec:rectangular} as the ``finite free'' versions of the additive, 
multiplicative, and rectangular convolutions from classical free probability.
The operators from free probability have known closure properties (they map 
distributions on the real line to distributions on the real line) and so one 
might hope the same is true for the finite analogues.
This turns out to be true: the convolution in Section~\ref{sec:additive} maps 
Hermitian determinantal representations
to Hermitian determinantal representations and 
the ones in Sections~\ref{sec:multiplicative} and \ref{sec:rectangular} map 
positive semidefinite representations to semidefinite representations.
In the multivariate case, one can show (using a powerful theorem of Helton and 
Vinnikov \cite{hv}) that the convolution in Section~\ref{sec:gsvd} preserves 
positive semidefinite representations as well.
Continuing the analogy, the operator $\star$ would be the natural finite 
analogue of the {\em box product} $\boxed{\star}$ from free probability 
\cite{nica-speicher} 
and so one might hope that it, too, has some sort of closure property.
However this seems to be completely open.
It would therefore be both useful and interesting to understand the conditions 
under which the operations in this paper can be shown to preserve some (amy) 
class of polynomials.

While the expected characteristic polynomial of a random matrix gives you some 
information, it will (in general) not be enough to characterize the eigenvalue 
distribution of the underlying random matrix.
However, there is a natural way to ``assign'' an eigenvalue distribution to a 
convolution --- the one which is uniformly distributed over the roots of the 
polynomial (the fact that polynomial convolutions preserve real stability imply 
that this will be a valid distribution on the real line).
It is still not known exactly how the uniform-over-roots distributions derived 
from polynomial convolutions relate to the actual distributions of the 
underlying random matrices.
The one area that seems to show the most striking resemblances to this is that 
of free probability, which one can view as the study of the limiting 
distributions of 
random matrix theory as the dimension approaches $\infty$.
There is some speculation (mostly by this author) that polynomial convolutions 
represent the limiting distributions of random matrix theory as some other 
parameter (usually referred to as $\beta$) approaches $\infty$.
There is some evidence supporting this idea \cite{vadim}, but in many cases it 
is not clear how to even define such a limit formally.
Understanding this relationship better, however, is certainly an interesting 
open problem (and one that remains fairly wide open).

\section{Acknowledgements}\label{sec:thanks}

This paper was a direct consequence of the IPAM program in Quantitative Linear Algebra.
Essentially all of the results here were discovered as a result of discussions that 
took place during this program.
The author specifically thanks Benno Mirabelli, who (among other things) 
pointed out the relationship between the convolution in Theorem~\ref{global} 
and the box convolution in free probability.


\begin{thebibliography}{999}

\bibitem{bar} A. Barvinok,
Two Algorithmic Results for the Traveling Salesman Problem, 
Math. of Oper. Res.; Feb. 1996, Vol. 21 (1),
65--84.

\bibitem{bb} J. Borcea and P. Br\"{a}nd\'{e}n. 
The Lee-Yang and P\'{o}lya-Schur programs. II. Theory of stable polynomials and 
applications.
Commun. on Pure and Appl. Math. 62(12), 2009: 1595-1631.

\bibitem{collins} B. Collins, Int\'{e}grales matricielles et probabilit\'{e}s 
non-commutatives, Ph.D.thesis, Universit\'{e} Paris 6, 2003.

\bibitem{edelman} A. Edelman, Eigenvalues and condition numbers of random 
matrices, Ph.D.thesis, M.I.T., 1989.


\bibitem{forrester} P. J. Forrester, Log-gases and random matrices (LMS-34). 
Princeton University Press. 2010.

\bibitem{golub} G. H. Golub and C. F. Van Loan, Matrix computations, 4th 
Edition, JHU Press, 2013

\bibitem{vadim} V. Gorin, A. W. Marcus, Crystallization of random matrix 
orbits, International Mathematics Research Notices, rny052 (2018). 
arXiv:1706.07393

\bibitem{hv}  W. Helton, V. Vinnikov. Linear  matrix  inequality  
representation  of  sets.  Comm. on Pure and Appl. Math., 60(5), 2007: 
654-–674.

\bibitem{horn} R. A. Horn, C. R. Johnson, Matrix Analysis, Cambridge Univ. 
Press, Cambridge, 1985.

\bibitem{macd} I. G. Macdonald, 
\textit{Symmetric functions and Hall polynomials}. 
Oxford University Press, 1998.



\bibitem{dui} A. W. Marcus, Discrete Unitary Invariance,
arXiv: 1607.06679


\bibitem{ffmain} A. W. Marcus, 
Polynomial convolutions and (finite) free probability, preprint.

\bibitem{ffp} A. W. Marcus, S. A. Spielman, N. Srivastava, D. A. Spielman, 
Finite Free Convolutions of Polynomials. arXiv:1504.00350

\bibitem{mm} M.  Marcus, Determinants of sums. The Coll. Math. Jour., 21.2 
(1990): pp.130-135.

\bibitem{benno} B. P. Mirabelli, Hermitian, Non-Hermitian and
Multivariate Finite Free Probability, Ph.D. Thesis (2020).

\bibitem{nica-speicher} A. Nica, R. Speicher. Lectures on the combinatorics of 
free probability, Vol. 13. Cambridge Univ. Press, 2006. 


\bibitem{stanley} R. P. Stanley. Enumerative Combinatorics,  Vol 1.  Cambridge 
Univ. Press, 2012.

\bibitem{gsvd} C. F. Van Loan, Generalizing the singular value decomposition, 
SIAM J. on Numer. Anal., 13 (1976), pp. 76–83

\end{thebibliography}
\end{document}